\def\LL{L^2} 
\def\func#1{\mathop{\rm #1}\nolimits}%
\newtheorem{remark}[theorem]{Remark}
\title{Non-Conforming Multiscale Finite Element Method for Stokes Flows in Heterogeneous Media. Part I: Methodologies and Numerical Experiments}
\author{B. P. Muljadi\footnotemark[1] \footnotemark[2] \footnotemark[3]
\and J. Narski\footnotemark[2] \footnotemark[3]
\and A. Lozinski\footnotemark[4]
\and P. Degond\footnotemark[2] \footnotemark[3]
} 
\begin{document}
\maketitle
\renewcommand{\thefootnote}{\fnsymbol{footnote}}
\footnotetext[1]{Earth Science and Engineering, Imperial College London, London SW7 2AZ, UK}
\footnotetext[2]{Universit\'e de Toulouse, UPS, INSA, UT1, UTM; Institut de Math\'ematiques de Toulouse; F-31062 Toulouse, FRANCE}
\footnotetext[3]{CNRS; Institut de Math\'ematiques de Toulouse UMR 5219; F-31062 Toulouse, FRANCE}
\footnotetext[4]{Laboratoire de Math\'ematiques de Besan\c{c}on, UMR CNRS 6623, Universit\'e de Franche-Comt\'e, 25030 Besan\c{c}on Cedex, FRANCE}
\renewcommand{\thefootnote}{\arabic{footnote}}

\begin{abstract}
The Multiscale Finite Element Method (MsFEM) is developed in the vein of Crouzeix-Raviart element for solving viscous incompressible flows in genuine heterogeneous media. Such flows are relevant in many branches of engineering, often at multiple scales and at regions where analytical representations of the microscopic features of the flows are often unavailable. Full accounts to these problems heavily depend on the geometry of the system under consideration and are computationally expensive. Therefore, a method capable of solving multiscale features of the flow without confining itself to fine scale calculations is sought after. 

The approximation of boundary condition on coarse element edges when computing the multiscale basis functions critically influences the eventual accuracy of any MsFEM approaches. The weakly enforced continuity of Crouzeix - Raviart function space across element edges leads to a natural boundary condition for the multiscale basis functions which relaxes the sensitivity of our method to complex patterns of obstacles exempt from the needs of implementing any oversampling techniques. Additionally, the application of penalization method makes it possible to avoid complex unstructured domain and allows extensive use of simpler Cartesian meshes. 
\end{abstract}

\begin{keywords} 
Crouzeix-Raviart Element, Multiscale Finite Element Method, Stokes Equations, Penalization Method 
\end{keywords}

\begin{AMS}
35J15, 65N12, 65N30
\end{AMS}

\pagestyle{myheadings}
\thispagestyle{plain}
\markboth{B. P. Muljadi et al.}{}

\section{Introduction}

Stokes equations relate to many engineering practices from reservoir engineering, micro-/nano-fluidics to mechano-biological systems. Often in these fields, the problems are at multiple scales both spatially and temporally. Multiscale problems may arise due to highly oscillatory coefficients of the system or due to heterogeneity of the domain; for example complex rock matrices when modelling sub-surface flows or random placements of buildings, people and trees in the context of urban canopy flows. Full account to these systems are difficult for they depend on the geometry and often demand huge computational resources. Despite modern renaissance of high performance computing, the size of the discrete problems remains big. In some engineering contexts, it is sometimes sufficient to predict macroscopic properties of multiscale systems. Hence it is desirable to develop an efficient computational algorithm to solve multiscale problems without being confined to solving fine scale solutions. We borrow the concept of Multiscale Finite Element Method (MsFEM) \cite{thomhou} by Hou and Wu, the concept of which hinges upon the extension of multiscale basis functions pre-calculated in fine mesh to represent a ''model'' of the microscopic structure of the flow. The fact that the multiscale basis are not modelled but rather calculated extends the applicability of MsFEM to problems where the analytical representations of microscopic are unavailable.

Within the last decades, several methods sprung from similar purpose namely, Generalized finite element methods \cite{babuskaetal1}, wavelet-based homogenization method \cite{dorobantuengquist}, variational multiscale method \cite{nolenetal},various methods derived from homogenization theory \cite{bourgeat}, equation-free computations \cite{kevrekidisetal}, heterogeneous multiscale method \cite{weinanengquist} and many others. In the context of diffusion in perforated media, some studies have been done both theoretically and numerically in \cite{cioranescuetal},\cite{cioranescumurat},\cite{henningohlberger},\cite{hornung}, and \cite{lions}. For the case of advection-diffusion a method derived from heterogeneous multiscale method addressing oscillatory coefficients is studied in \cite{Deng20091549}. For viscous, incompressible flows, multiscale methods based on homogenization theory for solving slowly varying Stokes flow in porous media have been studied in \cite{brownetal, brownefendievhoang}. Several theoretical and numerical studies have been done in the couple years to address Stokes-Darcy or Stokes-Brinkman problems in vugular or fractured porous media, see \cite{arbogastetal, giraultetal, gulbransenetal, popovetal}. 

When tackling highly heterogeneous problems, it is understood that a delicate treatment is needed at the boundary condition when constructing the multiscale basis function for it greatly influences the eventual accuracy of the method under consideration. Indeed in the original work of Hou and Wu, the oversampling method was introduced to provide the best approximation of the boundary condition of the multiscale basis functions. Inaccurate approximations of these functions on the boundary would leave a method highly unreliable when dealing with arbitrary pattern of porosities or matrices.  Oversampling here means that the local problem in the course element are extended to a domain larger than the element itself, but only the interior information would be communicated to the coarse scale equation. The aim is to reduce the effect of wrong boundary conditions and bad sampling sizes. The ways in which the sampled domain is extended lead to various oversampling methods, interested readers can refer to \cite{houefendiev}, \cite{chuetal}, \cite{henningpeterseim}, \cite{efendievetal}. 

The non-conforming nature of Crouzeix-Raviart element, see \cite{CRRairo}, is shown to provide great 'flexibility' especially when arbitrary patterns of porosities are considered. In the construction of Crouzeix-Raviart multiscale basis functions, the conformity between coarse elements are not enforced in a strong sense, but rather in a weak sense i.e., the method requires merely the average of the ''jump'' of the function to vanish at coarse element edges. When very dense obstacles are introduced, which often makes it prohibitively difficult to avoid intersections between coarse element edges and obstacles, the benefit of using Crouzeix-Raviart MsFEM is significant for it allows the multiscale basis functions to have natural boundary conditions on element edges making it insensitive to complex patterns of obstacles. Moreover, the integrated application of penalization method enables one to carry the simulations onto simple Cartesian meshes. This work continues our earlier works where Crouzeix-Raviart MsFEM is implemented on diffusion and advection-diffusion problems \cite{MsFEMCR1, MsFEMCR2, muljadicrmsfem}.

\begin{figure}[hbt]
\centering
\includegraphics[width = 3.0in]{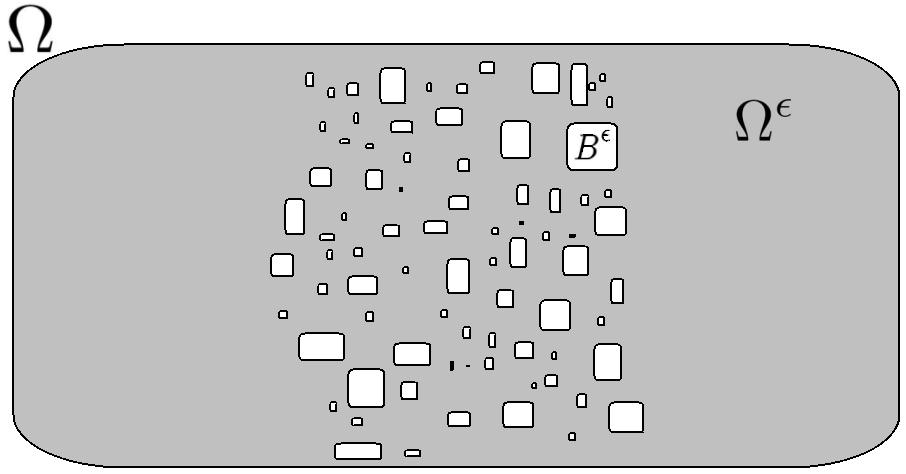} 
\caption{Illustration of domain $\Omega$ comprising fluid domain $\Omega^\epsilon$ perforated by set of obstacles $B^\epsilon$}
\label{domain}
\end{figure} 
 
This paper is organized as follows. The formulation of the problem is given in chapter \ref{sec:formulation}. The kernel of our paper is laid out in Chapter \ref{sec:crmsfem} where the Crouzeix-Raviart MsFEM is explained. In Chapter \ref{implement}, the adaptation of penalization method to our problem is discussed. Numerical tests comprising enclosed and open-channel flows with arbitrary pattern of obstacles and non-homogeneous boundary conditions are reported in Chapter \ref{sec:numerical} followed by some concluding remarks. 

\section{Formulation of Problem}
\label{sec:formulation}
We consider a Stokes problem posed in a bounded domain $\Omega \in \mathbb{R}^d$ within which a set $B^\epsilon$ of obstacles is included. The domain with voids left by obstacles is denoted $\Omega^\epsilon = \Omega \setminus B^\epsilon$ illustrated in Fig. \ref{domain}, where $\epsilon$ denotes the minimum width of obstacles. The Stokes problem is then to find $u : \Omega^\epsilon \rightarrow \mathbb{R}$ which is the solution to
\begin{eqnarray} 
 -\nu\Delta\vec{u}+\nabla p = \vec{f} & \hspace{5mm}\textrm{in}\hspace{5mm} & \Omega^\epsilon\label{main}\\
\nabla \cdot \vec{u} = 0 & \hspace{5mm}\textrm{in}\hspace{5mm} & \Omega^\epsilon \nonumber
\end{eqnarray}
The boundary value problem considered in equation \ref{main} posed on two-dimensional domain $\Omega$, together with boundary condition on $\partial\Omega$ is given by
\begin{eqnarray}
\vec{u}=0 & \hspace{5mm}\textrm{on}\hspace{5mm} & \partial B^\epsilon \cap \partial\Omega^\epsilon \label{mainBC}\\
\vec{u}=\vec{w}& \hspace{5mm}\textrm{on}\hspace{5mm} & \partial \Omega \cap \partial\Omega^\epsilon \nonumber
\end{eqnarray}
where $\vec{f} : \Omega \rightarrow \mathbb{R}$ is a given function, $\vec{w}$ is a function fixed on boundary $\partial\Omega$. In this paper, we consider only the Dirichlet boundary condition on $\partial B^\epsilon$ namely $u_{|\partial B^\epsilon} = 0$ thereby assuming that the obstacle is impenetrable. Other kinds of boundary conditions on $\partial B^\epsilon$ are subject to a completely new endeavour.

{
\subsection*{The weak formulation}

Let us restrict ourselves to the case of homogeneous
boundary conditions $\vec{w}=0$ here.\footnote{This is done only to simplify the forth-coming presentation of our MsFEM technique. Its generalization to non-homogeneous boundary data is actually straightforward and is explained in detail in Section \ref{nonhomog}.} Introduce the function spaces $%
V=(H_{0}^{1}(\Omega ^{\epsilon }))^{d}$ for the velocity, $%
M=\LL_{0}(\Omega ^{\varepsilon })=\{p\in \LL (\Omega ^{\varepsilon })$
s.t. $\int_{\Omega ^{\varepsilon }}p=0\}$ for the pressure, and $X=V\times M 
$. The weak formulation of the problem above reads: find $(\vec{u},p)\in X$
such that 
\[
c((\vec{u},p),(\vec{v},q))=\int_{\Omega ^{\varepsilon }}\vec{f}\cdot \vec{v}%
,\quad \forall (\vec{v},q)\in X 
\]%
where $c$ is the bilinear form defined by%
\begin{equation}\label{mainvar}
c((\vec{u},p),(\vec{v},q))=\int_{\Omega ^{\varepsilon }}\nabla \vec{u}%
:\nabla \vec{v}-\int_{\Omega ^{\varepsilon }}p\func{div}\vec{v}-\int_{\Omega
^{\varepsilon }}q\func{div}\vec{u} 
\end{equation}

The existence and uniqueness of the solution to problem (\ref{mainvar}) is
guaranteed by the theory of saddle point problems, especially by the inf-sup
property
\begin{equation}
\inf_{p\in M}\sup_{\vec{v}\in V}\frac{\int_{\Omega ^{\varepsilon }}p%
\mathop{\rm div}\nolimits\vec{v}}{\Vert p\Vert _{M}\Vert \vec{v}\Vert _{V}}%
\geq \gamma >0  \label{infsupb}
\end{equation}%
which is known to hold on any domain with Lipschitz boundary \cite{girault}.
A nice reformulation of this theory is presented in \cite{ern}, where it is
proved that (\ref{infsupb}) implies also the inf-sup property for the form $%
c $ 
\begin{equation}
\inf_{(\vec{u},p)\in X}\sup_{(\vec{v},q)\in X}\frac{c((\vec{u},p),(\vec{v}%
,q))}{\Vert \vec{u},p\Vert _{M}\Vert \vec{v},q\Vert _{X}}\geq \gamma _{c}>0
\label{infsupc}
\end{equation}%
with a constant $\gamma _{c}$ that depends only on the constant $\gamma $ in
(\ref{infsupb}). One invokes than the BNB theorem that states that the
variational problem (\ref{mainvar}) has the unique solution provided the
symmetric bilinear form $c$ is bounded (which is evident in our case) and
satisfies the inf-sup property (\ref{infsupc}). Note that the second
hypothesis in the BNB theorem of \cite{ern} is void in the case of problems
with a symmetric bilinear form.

\begin{remark}
In what follows, we shall sometimes extend the functions defined on $\Omega^\epsilon$ and vanishing on $\partial B^\epsilon$ to the whole domain $\Omega$ by setting them to 0 on $B^\epsilon$. From now on, we shall identify such functions with their extended versions without further notice. For example, an alternative definition of the space $V$ can be written as 
$$
V=\{\vec{v}\in H_{0}^{1}(\Omega)^{d} \text{ such that }\vec{v}=0 \text{ on }B^\epsilon \} 
$$
\end{remark}
}

\begin{figure}[hbt]
\centering
\includegraphics[width = 2.5in]{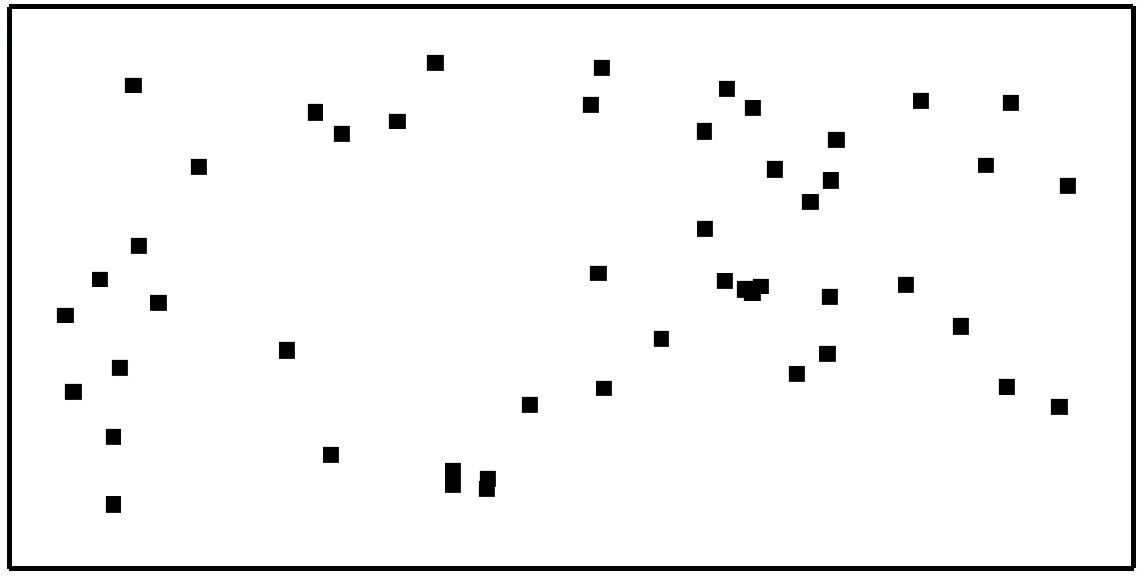} 
\caption{Computational domain with $49$ arbitrarily placed obstacles for cavity flow}
\label{cavityperfor}
\end{figure} 
\begin{figure}[hbt]
\centering
\includegraphics[width = 5in]{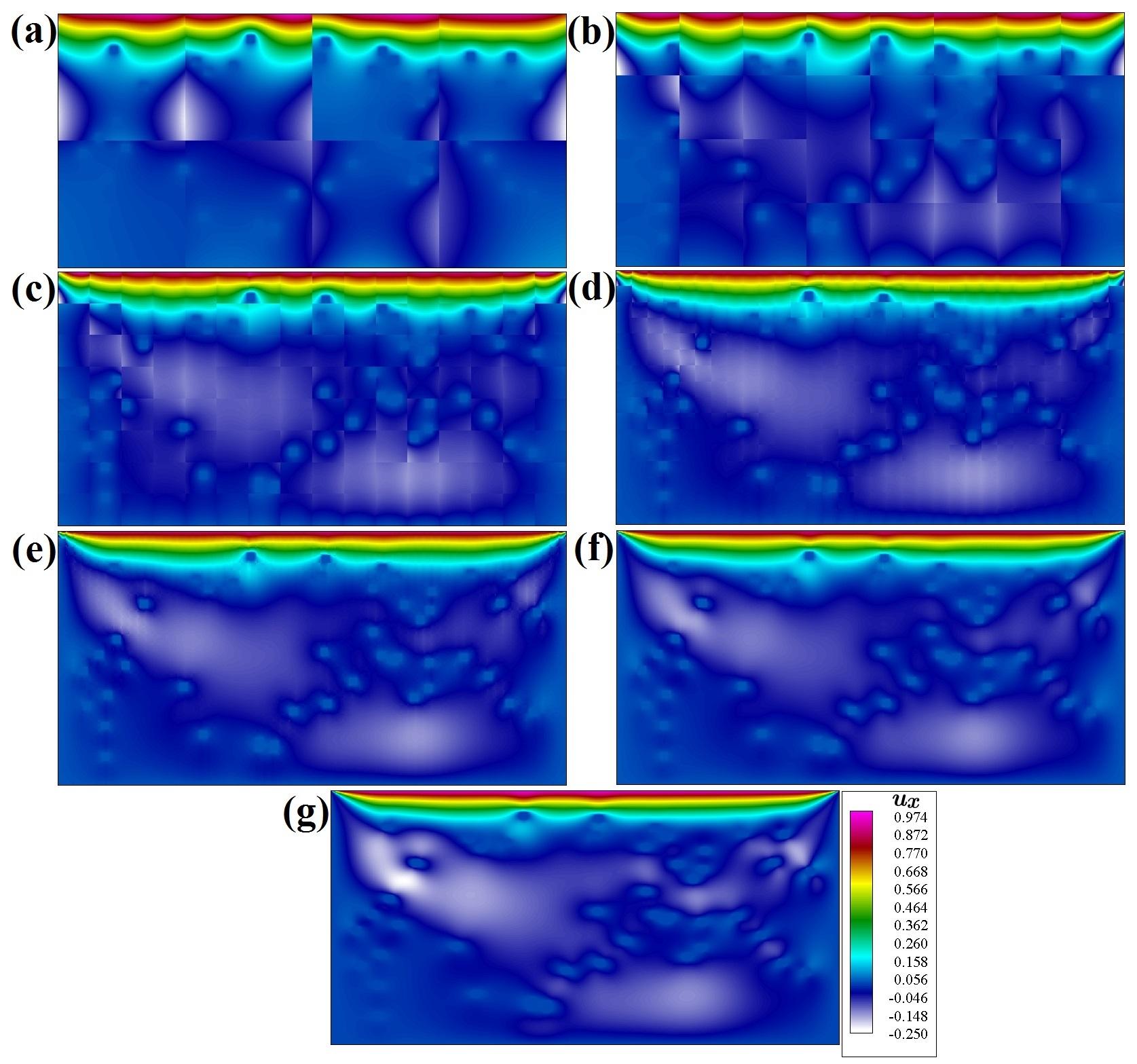} 
\caption{$u_x$ contour of a cavity flow (a) $2\times 4$, (b) $4\times 8$, (c) $8\times 16$, (d)$16\times32$, (e)$32\times 64$, (f) $64\times 128$, (g) Reference solution calculated on $640\times 1280$, with 49 arbitrarily placed obstacles}
\label{cavityux}
\end{figure} 
\begin{figure}[hbt]
\centering
\includegraphics[width = 5in]{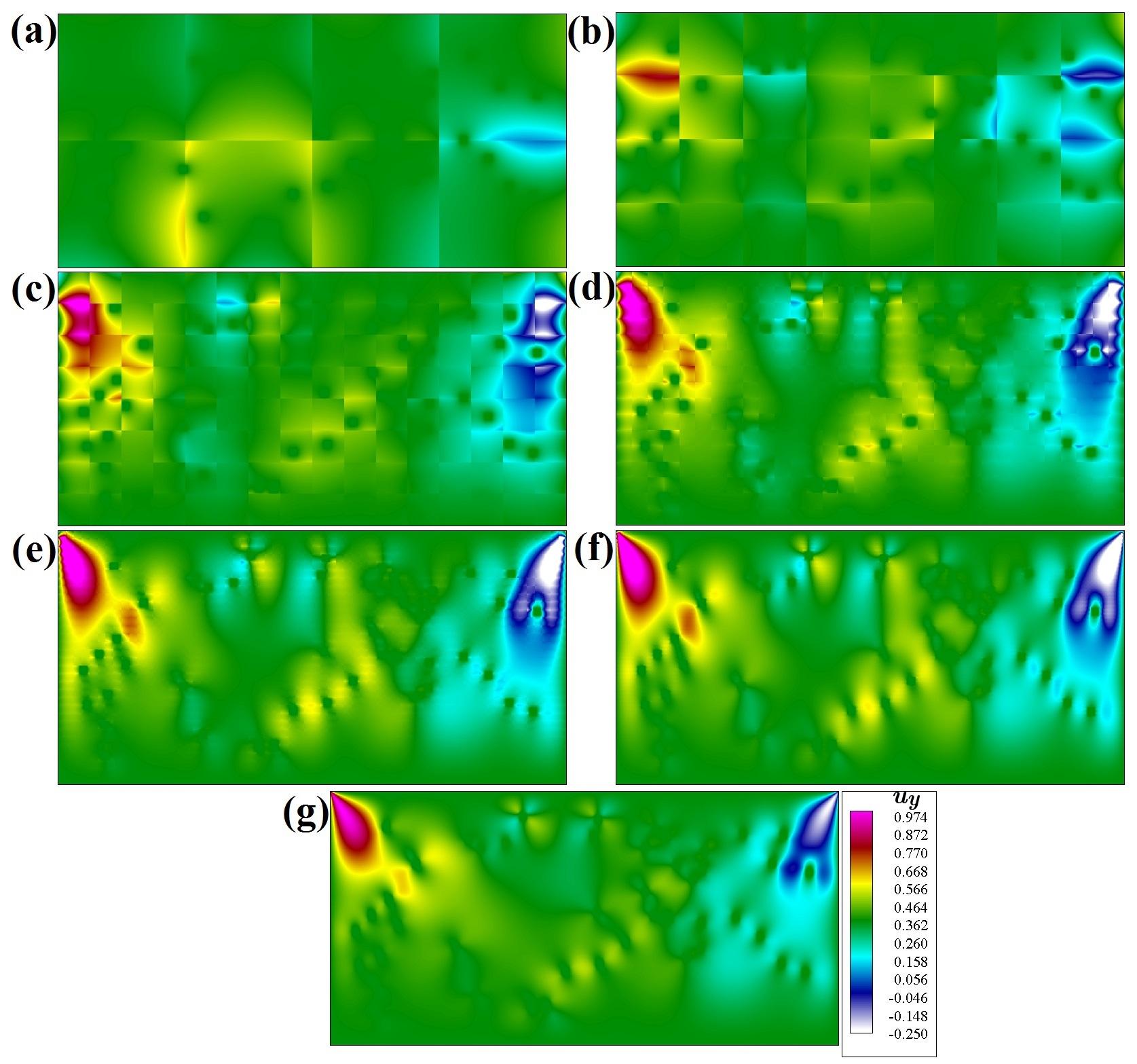} 
\caption{$u_y$ contour of a cavity flow (a) $2\times 4$, (b) $4\times 8$, (c) $8\times 16$, (d)$16\times32$, (e)$32\times 64$, (f) $64\times 128$, (g) Reference solution calculated on $640\times 1280$, with 49 arbitrarily placed obstacles}
\label{cavityuy}
\end{figure} 
\begin{figure}[hbt]
\centering
\includegraphics[width = 3in]{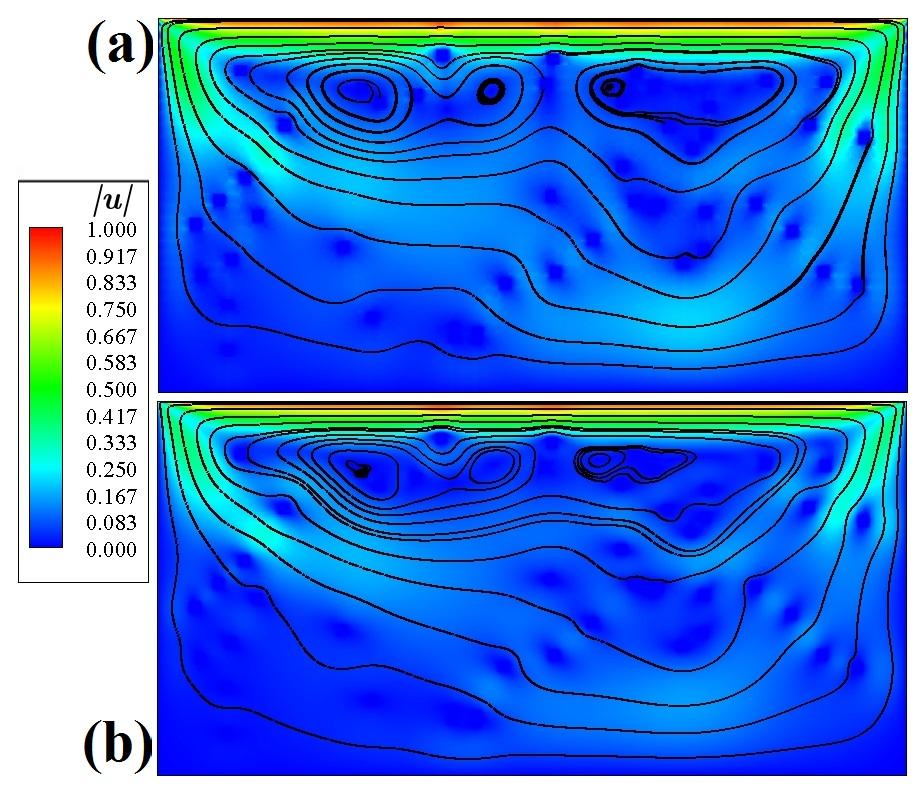} 
\caption{$|u|$ contour of a cavity flow with streamlines calculated with Crouzeix-Raviart MsFEM on (a) $32\times 64$ elements and (b) reference solution}
\label{cavitystream}
\end{figure} 
\begin{figure}[hbt]
\centering
\includegraphics[width = 5in]{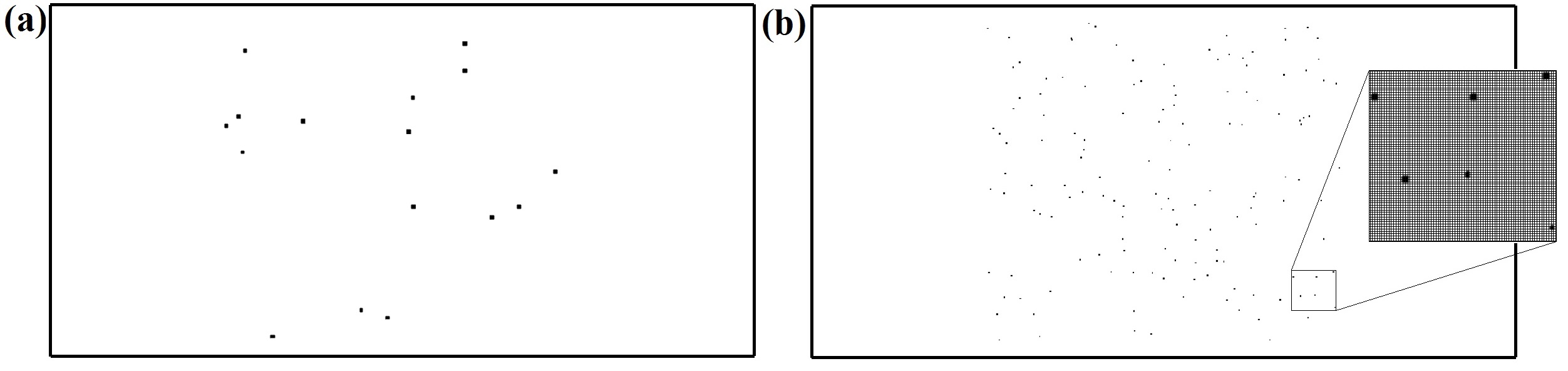} 
\caption{Computational domain with arbitrarily placed obstacles for open-channel flow (a) case A. With $16$ obstacles (b) case B. With $144$ obstacles (the enlarged figure is gridded to illustrate the size of fine obstacles with respect to fine elements)}
\label{channelperfor}
\end{figure} 
\begin{figure}[hbt]
\centering
\includegraphics[width = 5in]{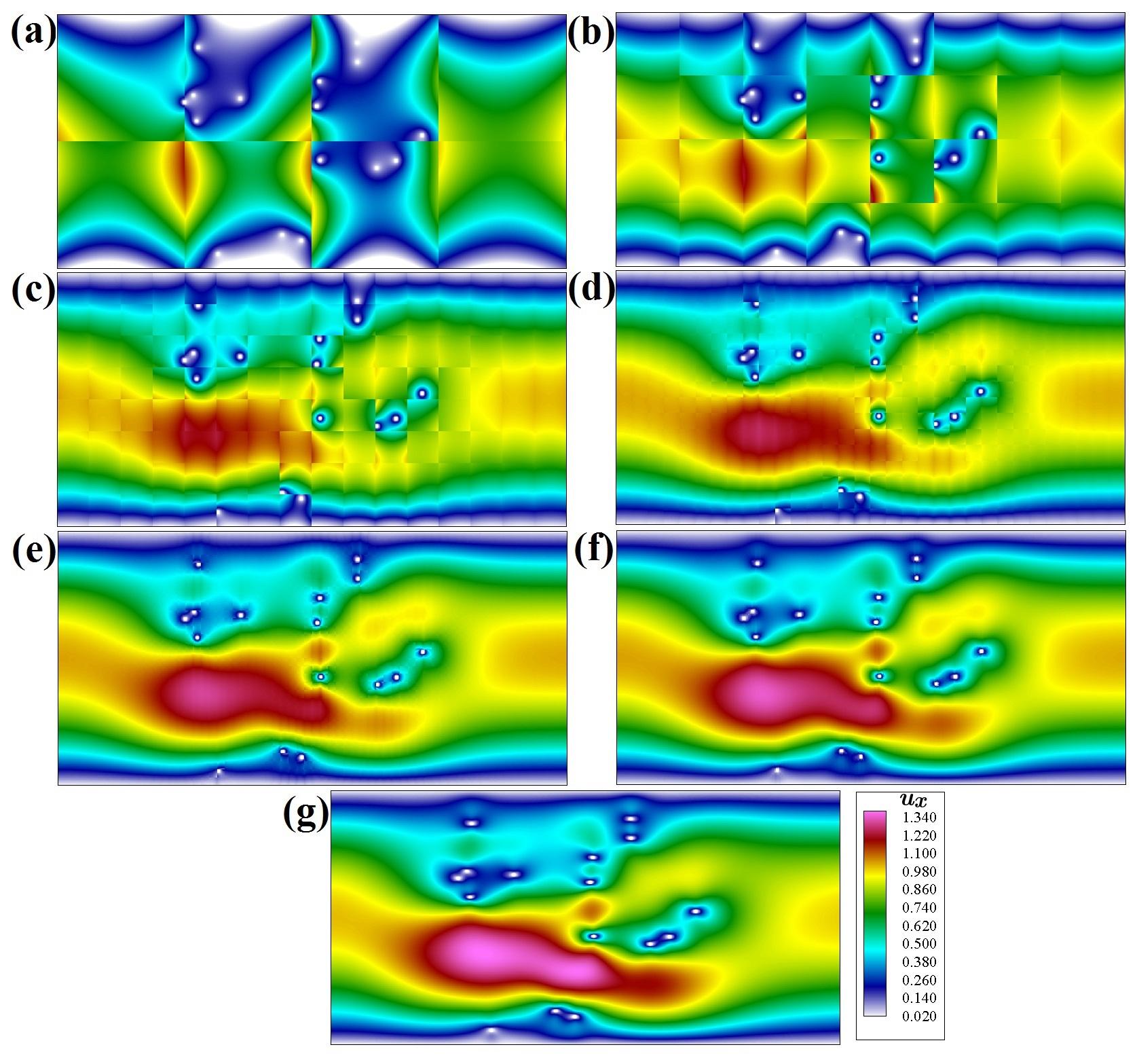} 
\caption{$u_x$ contour of a channel flow (a) $2\times 4$, (b) $4\times 8$, (c) $8\times 16$, (d)$16\times32$, (e)$32\times 64$, (f) $64\times 128$, (g) Reference solution calculated on $640\times 1280$, with 16 arbitrarily placed obstacles}
\label{channeluxa}
\end{figure} 
\begin{figure}[hbt]
\centering
\includegraphics[width = 5in]{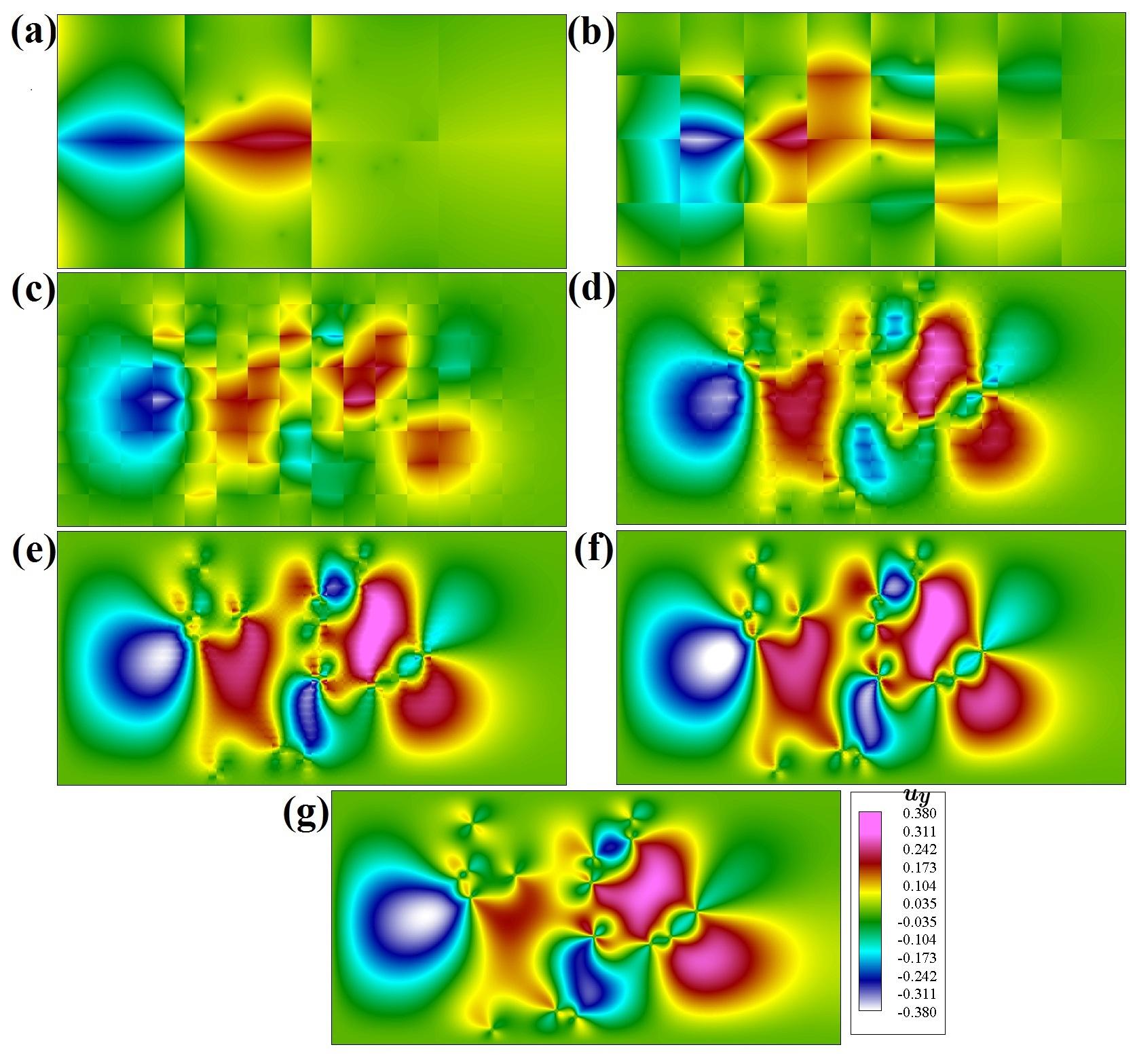} 
\caption{$u_y$ contour of a channel flow (a) $2\times 4$, (b) $4\times 8$, (c) $8\times 16$, (d)$16\times32$, (e)$32\times 64$, (f) $64\times 128$, (g) Reference solution calculated on $640\times 1280$, with 16 arbitrarily placed obstacles}
\label{channeluya}
\end{figure}
\begin{figure}[hbt]
\centering
\includegraphics[width = 3in]{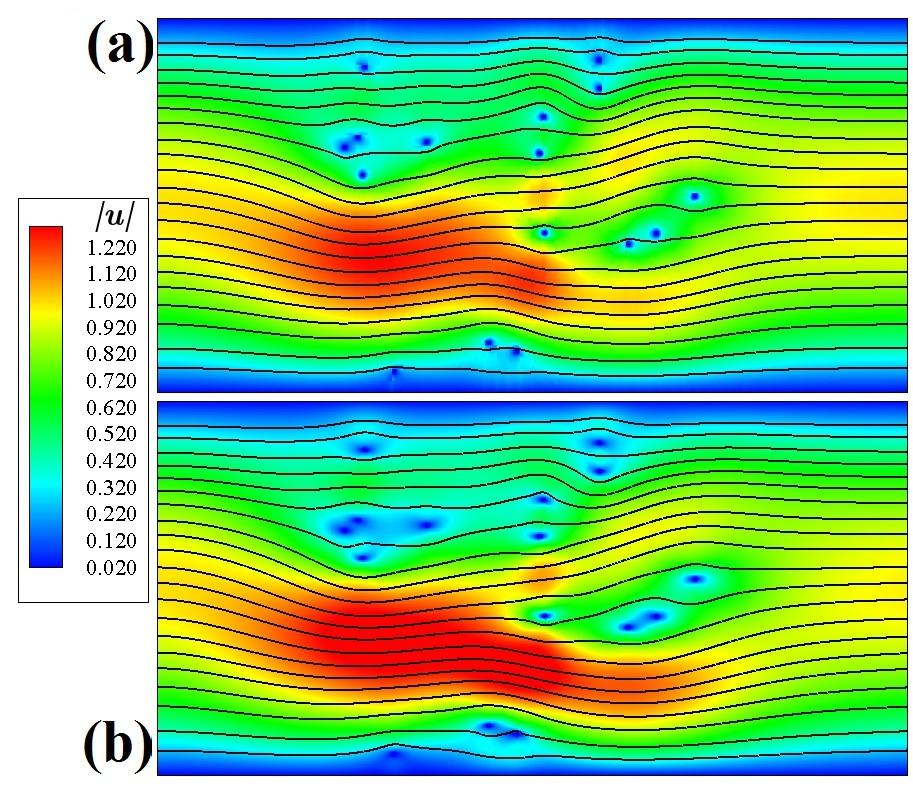} 
\caption{$|u|$ contour of a channel flow with streamlines calculated with Crouzeix-Raviart MsFEM on (a) $32\times 64$ elements and (b) reference solution with 16 arbitrarily placed obstacles}
\label{channelstreama}
\end{figure} 
\begin{figure}[hbt]
\centering
\includegraphics[width = 5in]{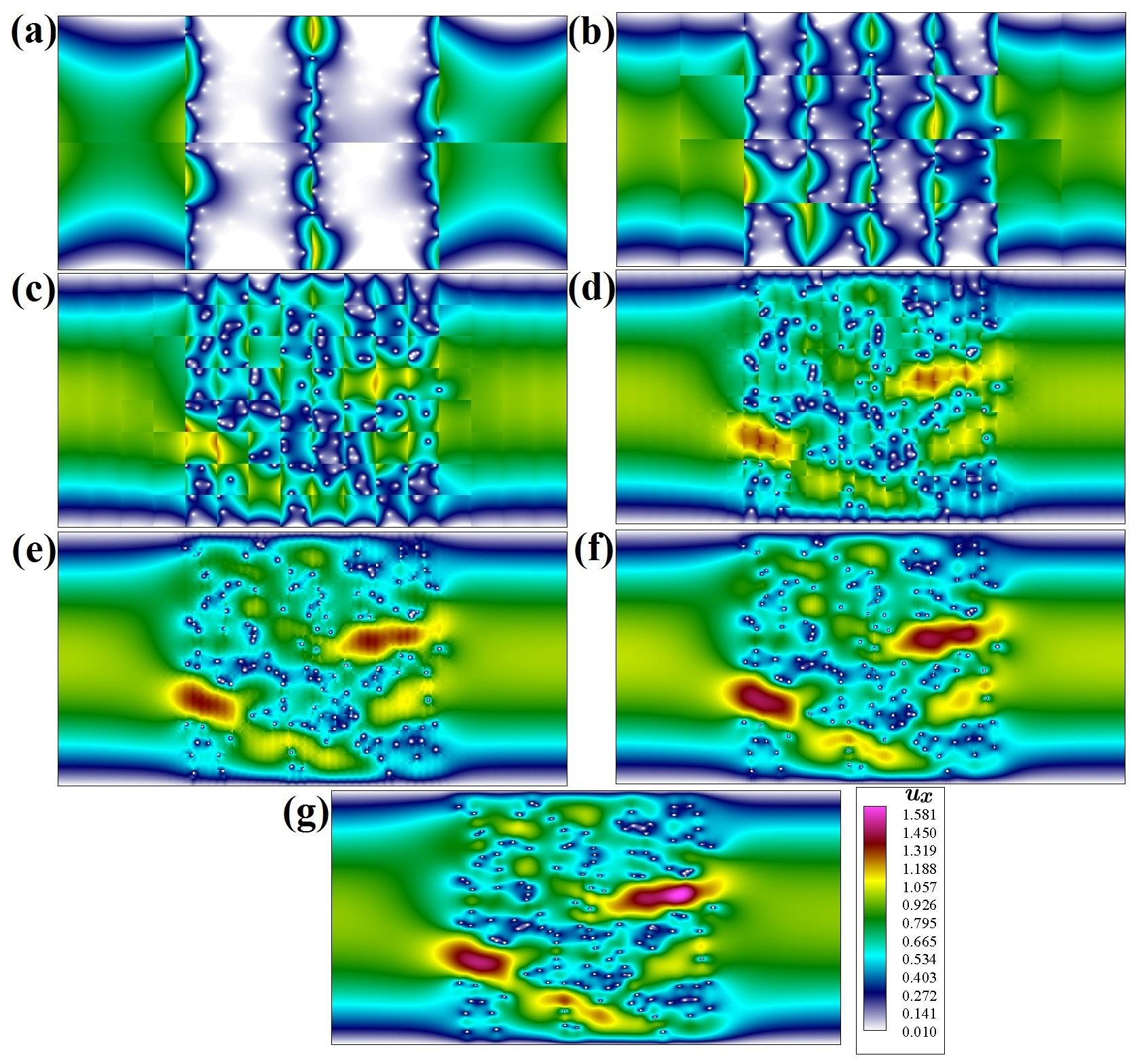} 
\caption{$u_x$ contour of a channel flow (a) $2\times 4$, (b) $4\times 8$, (c) $8\times 16$, (d)$16\times32$, (e)$32\times 64$, (f) $64\times 128$, (g) Reference solution calculated on $640\times 1280$, with 144 arbitrarily placed obstacles}
\label{channelux}
\end{figure} 
\begin{figure}[hbt]
\centering
\includegraphics[width = 5in]{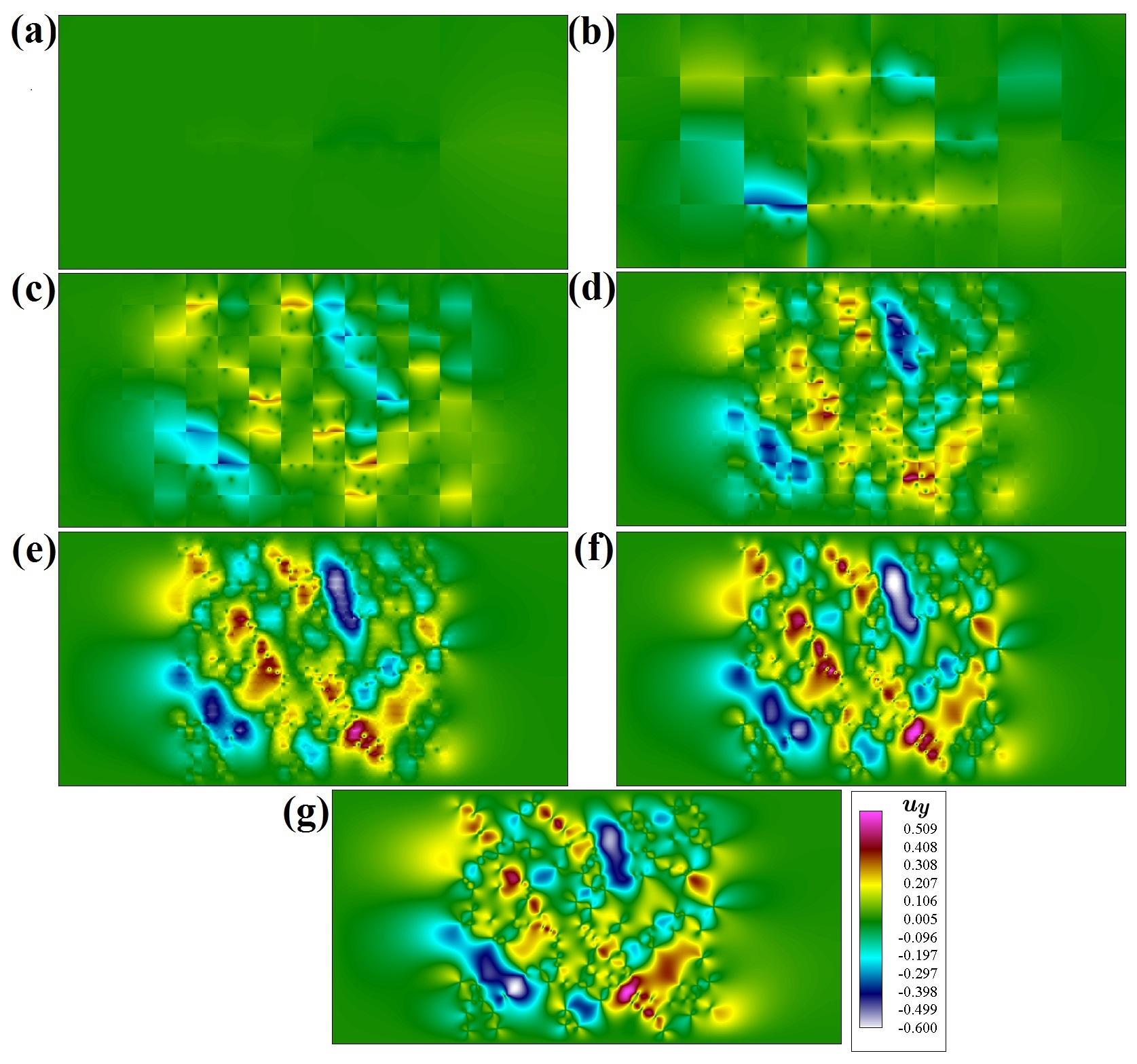} 
\caption{$u_y$ contour of a channel flow (a) $2\times 4$, (b) $4\times 8$, (c) $8\times 16$, (d)$16\times32$, (e)$32\times 64$, (f) $64\times 128$, (g) Reference solution calculated on $640\times 1280$, with 144 arbitrarily placed obstacles}
\label{channeluy}
\end{figure}
\begin{figure}[hbt]
\centering
\includegraphics[width = 3in]{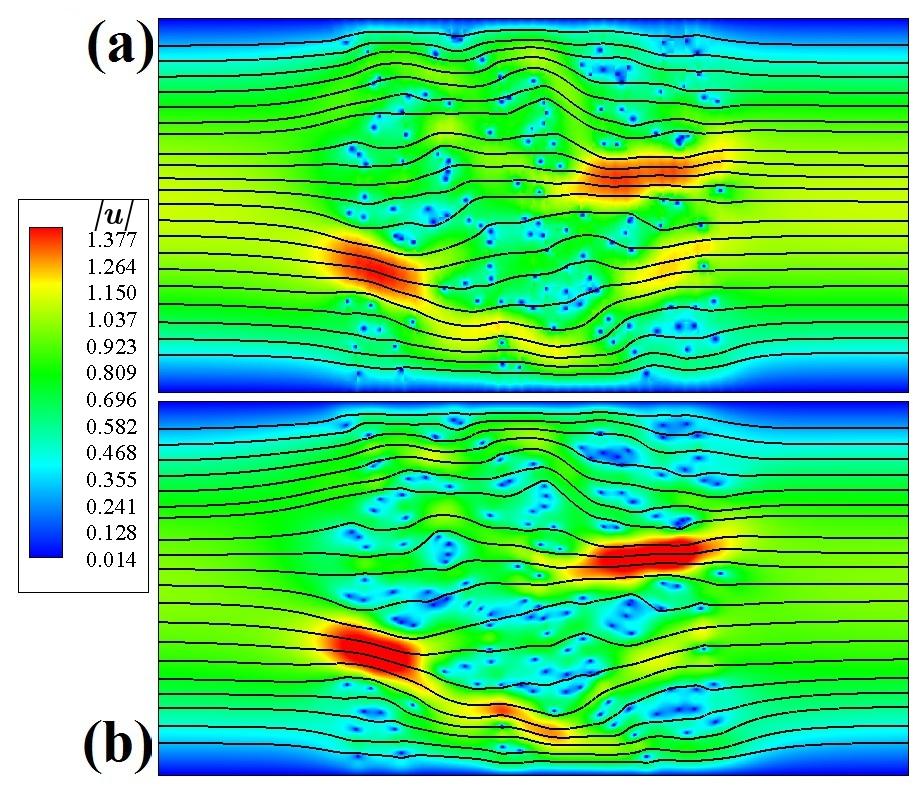} 
\caption{$|u|$ contour of a channel flow with streamlines calculated with Crouzeix-Raviart MsFEM on (a) $32\times 64$ elements and (b) reference solution with 144 arbitrarily placed obstacles}
\label{channelstream}
\end{figure} 

\section{Crouzeix-Raviart MsFEM space for Stokes equations}
\label{sec:crmsfem}
Let us introduce a mesh $\mathcal{T}_H$ on $\Omega$ consisting of $N_H$ polygons/polyhedrons of diameter at most $H$. 
Let $\mathcal{E}_H$ denote the set of all the edges/faces of $\mathcal{T}_H$ including those on the domain boundary $\partial\Omega$. It is assumed that the mesh does not contain any hanging nodes and each edge is shared by two elements except those on $\partial\Omega$ which belong only to one element. We also assume that the mesh $\mathcal{T}_H$ is regular, i.e. for any mesh element $T \in \mathcal{T}_H$, there exists a smooth one-to-one mapping $\mathcal{M} : \tilde{T} \rightarrow T$ where $\tilde{T} \subset \mathbb{R}^d$ is the element of reference, and that $\parallel \nabla\mathcal{M} \parallel_{L^\infty} \leq DH$, $\parallel \nabla\mathcal{M}^{-1} \parallel_{L^\infty} \leq DH^{-1}$ with $D$ being universal constant independent of $T$. From now on, the elements of  $\mathcal{T}_H$ will be referred to as the triangles and the elements of  $\mathcal{E}_H$ will be referred to as the edges, although all the reasoning makes perfect sense in a general situation of any ambient dimension and a mesh consisting of polygons/polyhedrons of any shape.    

\subsection{Definition of the MsFEM space}{
To construct the MsFEM space, we proceed as in our previous work \cite{MsFEMCR1, MsFEMCR2} by introducing first the extended velocity space 
\[
V_{H}^{ext}=\left\{ 
\begin{array}{c}
\vec{u}\in (\LL (\Omega ))^{d}\text{ such that }\vec{u}|_{T}\in
(H^{1}(T))^{d}\text{ for any }T\in \mathcal{T}_{H},
\  \vec{u}=0\text{ on }B^\epsilon,\\ 
\noalign{\vskip 3pt}\displaystyle\int_{E}[[u]]=0\text{ for all }E\in 
\mathcal{E}_{H}%
\end{array}%
\right\} , 
\]%
where $[[u]]$ denotes the jump of~$u$ across an internal edge and $[[u]]=u$
on the boundary $\partial \Omega $. The idea behind this space is to enhance
the natural velocity space $(H_{0}^{1}(\Omega ))^{d}$ so that to have at our
disposal the vector fields discontinuous across the edges of the
mesh since our goal is to construct a non conforming approximation method. The continuity is preserved only in the weak sense by requiring that
the average is conserved across any edge. 

We want now to decompose the
extended velocity-pressure space $X_{H}^{ext}=V_{H}^{ext}\times M$ into the
direct sum of a finite dimensional subspace $X_{H}$ of coarse scales,
which will be used for approximation, and an infinitely dimensional subspace 
$X_{H}^{0}$ of unresolved fine scales 
\begin{equation}
X_{H}^{ext}=X_{H}\oplus X_{H}^{0}.  \label{XHdirect}
\end{equation}%
More specifically, we introduce the fine scale subspace as $%
X_{H}^{0}=V_{H}^{0}\times M_{H}^{0}$ with%
\begin{eqnarray*}
V_{H}^{0} &=&\left\{ \displaystyle u\in V_{H}^{ext}\text{ such that }%
\int_{E}u=0\text{ for all }E\in \mathcal{E}_{H}\right\} \\
M_{H}^{0} &=&\left\{ \displaystyle p\in M\text{ such that }\int_{T}p=0\text{
for all }T\in \mathcal{T}_{H}\right\} \text{ }
\end{eqnarray*}%
The subspace $X_{H}$ is then defined as the "orthogonal" complement of $%
X_{H}^{0}$ with respect to the bilinear form $c:$%
\begin{equation}
(\vec{u}_{H},p_{H})\in X_{H}~\iff c((\vec{u}_{H},p_{H}),(\vec{v},q))=0,\quad
\forall (\vec{v},q)\in X_{H}^{0}  \label{XHdef}
\end{equation}%
Two remarks are in order to clarify this definition:

\begin{itemize}
\item 
The bilinear form $c$ is applied in the formula above to the functions from $V_H^{ext}$ which are discontinuous across the edges of the mesh, and thus non differentiable. To bypass this difficulty, the integrals in the definition of $c$ should be understood as $\int_{\Omega
^{\varepsilon }}\cdots =\sum_{T\in \mathcal{T}_{H}}\int_{\Omega
^{\varepsilon }\cap T}\cdots $. The same convention will be implicitly employed from now on if needed, as will be clear form the context.   
\item 
We have put the word "orthogonal" in quotes since the bilinear form $c$ is
not a scalar product (not positive definite). We shall prove however in the following lemma that
the subspace $X_{H}\subset X_{H}^{ext}$ defined by (\ref{XHdef}) forms indeed a direct sum with $X_H^0$.
\end{itemize}

\begin{lemma}
Let the functional spaces $M_{H}\subset M$ and $V_{H}\subset V_{H}^{ext}$ be
defined as
\begin{equation}\label{MHdef}
M_{H}=\{{q}\in \LL_{0}(\Omega )\,\text{\ such that }\,q|_{T}=const,~%
\forall T\in \mathcal{T}_{H}\}
\end{equation}
\begin{eqnarray}\label{VHdef}
V_{H} &=&\{\vec{v}:(\LL (\Omega ))^{d}\,\text{ }:\,\forall T\in \mathcal{T}%
_{H}~\exists s\in L_{0}^{d}(\,\Omega ^{\varepsilon }\cap T)~\text{such that}
\\
&&-\Delta \vec{u}_{H}+\nabla s=0\text{ on }\Omega ^{\varepsilon }\cap T, 
\nonumber \\
&&\func{div}\vec{u}_{H} =const\text{ on }\Omega ^{\varepsilon }\cap T 
\nonumber \\
&&\vec{u}_{H}=0\text{ on }B^{\varepsilon }\cap T  \nonumber \\
&&n\cdot \nabla \vec{u}_{H}-sn=const\text{ on }E\cap \Omega ^{\varepsilon },
\  \forall E\in\mathcal{E}(T)\}  \nonumber
\end{eqnarray}%
where $\mathcal{E}(T)$ is the ensemble of edges composing $\partial T$.
Let, for any $\vec{u}_{H}\in V_{H}$, glue together the functions $s$ on triangles $T\in 
\mathcal{T}_{H}$ in the definition above into a single function $\pi _{H}(\vec{u}%
_{H})\in M_H^0$ such that $\pi _{H}(\vec{u}_{H})=s$ on any triangle $T\in \mathcal{T}_{H}$.
Then, $\pi _{H}:V_{H}\rightarrow M_{H}^{0}$ is a well defined linear operator. The space 
$X_H$ defined by (\ref{XHdef}) can be represented as
\begin{equation}
X_{H}=span\{(\vec{u}_{H},\pi _{H}(\vec{u}_{H})),~\vec{u}_{H}\in
V_{H}\}\oplus span\{(0,\bar{p}_{H}),~\bar{p}_{H}\in M_{H}\}  \label{XHdef1}
\end{equation}%
Moreover, the relation (\ref{XHdirect}) holds true.
\end{lemma}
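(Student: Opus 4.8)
The plan is to characterize $X_H$ explicitly by unfolding the orthogonality condition (\ref{XHdef}), and only afterwards to establish the direct sum (\ref{XHdirect}). First I would test (\ref{XHdef}) separately against $(0,q)$ and $(\vec{v},0)$. Choosing $(0,q)$ with $q\in M_{H}^{0}$ gives $\int_{\Omega^{\varepsilon}}q\,\func{div}\vec{u}_{H}=0$; since $q|_{T}$ can be taken to range over all zero-mean functions on a single $\Omega^{\varepsilon}\cap T$, this forces $\func{div}\vec{u}_{H}$ to be constant on each $\Omega^{\varepsilon}\cap T$. Choosing $(\vec{v},0)$ with $\vec{v}\in V_{H}^{0}$ and integrating by parts elementwise (legitimate by the convention $\int_{\Omega^{\varepsilon}}=\sum_{T}\int_{\Omega^{\varepsilon}\cap T}$) splits the identity into a volume part $\sum_{T}\int_{\Omega^{\varepsilon}\cap T}(-\Delta\vec{u}_{H}+\nabla p_{H})\cdot\vec{v}$ and edge parts $\sum_{E}\int_{E\cap\Omega^{\varepsilon}}(n\cdot\nabla\vec{u}_{H}-p_{H}n)\cdot\vec{v}$, the obstacle boundaries dropping out because $\vec{v}=0$ on $B^{\varepsilon}$. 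Writing $p_{H}=s+\bar{p}_{H}$ with $\bar{p}_{H}\in M_{H}$ its piecewise-constant part and $s\in M_{H}^{0}$, localizing to $\vec{v}$ supported inside one element yields $-\Delta\vec{u}_{H}+\nabla s=0$ on $\Omega^{\varepsilon}\cap T$; then varying the edge traces of $\vec{v}$, which on every edge have vanishing average since $\vec{v}\in V_{H}^{0}$, forces $n\cdot\nabla\vec{u}_{H}-sn$ to be constant on each $E\cap\Omega^{\varepsilon}$. Together with $\vec{u}_{H}=0$ on $B^{\varepsilon}$ this shows $\vec{u}_{H}\in V_{H}$ and identifies $s$ with $\pi_{H}(\vec{u}_{H})$; the converse inclusion follows by reversing the integration by parts, all edge terms vanishing because the traction is edgewise constant and paired against traces of zero average. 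This is exactly the representation (\ref{XHdef1}).

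Along the way I would settle that $\pi_{H}$ is well defined: existence of $s$ is built into the definition of $V_{H}$, while uniqueness follows from $\nabla(s_{1}-s_{2})=0$ on $\Omega^{\varepsilon}\cap T$, so that $s_{1}-s_{2}$ is constant on each connected $\Omega^{\varepsilon}\cap T$ and hence zero by the zero-mean normalization; linearity is immediate from linearity of the defining relations. The one point needing care is connectedness of $\Omega^{\varepsilon}\cap T$: if an element split into several fluid components the local pressure would be determined only up to a constant per component, which I would handle by normalizing the mean on each component.

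It remains to prove the direct sum (\ref{XHdirect}), which I would recast as nondegeneracy of $c$ on the fine space $X_{H}^{0}$. For $X_{H}\cap X_{H}^{0}=\{0\}$, given $(\vec{u},p)$ in the intersection I would test the orthogonality of $X_{H}$ against the admissible pair $(\vec{u},-p)\in X_{H}^{0}$, obtaining $\int_{\Omega^{\varepsilon}}|\nabla\vec{u}|^{2}=0$; hence $\vec{u}$ is elementwise constant, and the vanishing of its edge averages together with $\vec{u}=0$ on $B^{\varepsilon}$ forces $\vec{u}=0$. Then $\int_{\Omega^{\varepsilon}}p\,\func{div}\vec{v}=0$ for all $\vec{v}\in V_{H}^{0}$, and applying the Lipschitz-domain inf-sup (\ref{infsupb}) on each $\Omega^{\varepsilon}\cap T$ to a local test field extended by zero (which lands in $V_{H}^{0}$) yields $p|_{T}=0$ for every $T$, i.e. $p=0$. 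For the spanning property I would, given $(\vec{u},p)\in X_{H}^{ext}$, seek $(\vec{u}_{0},p_{0})\in X_{H}^{0}$ solving $c((\vec{u}_{0},p_{0}),(\vec{v},q))=c((\vec{u},p),(\vec{v},q))$ for all $(\vec{v},q)\in X_{H}^{0}$; then $(\vec{u},p)-(\vec{u}_{0},p_{0})$ lies in $X_{H}$, establishing the sum, while uniqueness of $(\vec{u}_{0},p_{0})$ gives directness.

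The hard part is the well-posedness of this last fine-scale problem, i.e. the inf-sup property of $c$ on $X_{H}^{0}$. Following the reformulation of \cite{ern} already invoked for (\ref{infsupc}), I would reduce it to two ingredients: coercivity of $\int_{\Omega^{\varepsilon}}\nabla\vec{u}:\nabla\vec{v}$ on the divergence-constrained part of $V_{H}^{0}$, which I expect from a broken Poincar\'e inequality that is licit precisely because the edge averages of functions in $V_{H}^{0}$ vanish; and the divergence inf-sup between $V_{H}^{0}$ and $M_{H}^{0}$, assembled elementwise from (\ref{infsupb}) on each perforated element $\Omega^{\varepsilon}\cap T$ by zero-extension. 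The genuinely delicate step is this inf-sup between $V_{H}^{0}$ and $M_{H}^{0}$: one must confirm that the local inf-sup constants stay positive on the perforated subdomains and combine into a global one on the broken space. For the qualitative direct-sum statement positivity on each fixed element suffices; uniformity of the constant in $H$ and $\varepsilon$, which would be needed for error estimates, I would defer.
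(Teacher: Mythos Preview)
Your proposal is correct and follows essentially the same route as the paper: test the orthogonality (\ref{XHdef}) against $(0,q)$ to get $\func{div}\vec{u}_H=\text{const}$ elementwise, then against $(\vec{v},0)$ and integrate by parts to extract the local Stokes equations and constant-traction edge conditions, identifying the pressure part in $M_H^0$ with $\pi_H(\vec{u}_H)$; for the spanning $X_H^{ext}=X_H+X_H^0$, both you and the paper solve a local saddle-point problem on each element using the inf-sup on $\Omega^\varepsilon\cap T$.

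The one genuine difference is the argument for $X_H\cap X_H^0=\{0\}$. The paper first establishes the sum $X_H^{ext}=X_H+X_H^0$ and then argues that any $(\vec{u},p)$ in the intersection is $c$-orthogonal to all of $X_H^{ext}$, invoking the global inf-sup (\ref{infsupc}). Your approach is more direct and self-contained: testing against $(\vec{u},-p)\in X_H^0$ immediately gives $\int_{\Omega^\varepsilon}|\nabla\vec{u}|^2=0$, whence $\vec{u}=0$ via the zero edge averages and the vanishing on $B^\varepsilon$, and then the local inf-sup kills $p$. This avoids the slightly awkward appeal to (\ref{infsupc}) on a space larger than $X$, and decouples the intersection argument from the spanning argument. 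Your remark on connectedness of $\Omega^\varepsilon\cap T$ for the well-definedness of $\pi_H$ is also a point the paper passes over silently.
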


\begin{proof}
Let $(\vec{u}_{H},p_{H})\in X_{H}$ in the sense of definition (\ref{XHdef}).
We can decompose $p_{H}$ as%
\[
p_{H}=\bar{p}_{H}+p_{H}^{\prime }\text{ with }\bar{p}_{H}\in M_{H}\text{ and 
}p_{H}^{\prime }\in M_{H}^{0}\text{.} 
\]%
This decomposition is unique since the value $\bar{p}_{H}$ on any triangle $%
T\in \mathcal{T}_{H}$ is simply the average of $p_{H}$ on this triangle.
Noting that 
\[
\int_{\Omega ^{\varepsilon }}\bar{p}_{H}\func{div}\vec{v}
=\sum_{T\in\mathcal{T}_{H}}\bar{p}_{H}|_{T}\int_{T}\func{div}\vec{v}
=\sum_{T\in \mathcal{T}_{H}}\bar{p}_{H}|_{T}\int_{\partial T}\bar{v}%
\cdot n=0 
\]%
for any $\bar{v}\in V_{H}^{0}$, we can rewrite the definition (\ref{XHdef})
as%
\begin{equation}
c((\vec{u}_{H},p_{H}),(\vec{v},q))=\int_{\Omega ^{\varepsilon }}\nabla \vec{u%
}_{H}:\nabla \vec{v}-\int_{\Omega ^{\varepsilon }}p_{H}^{\prime }\func{div}%
\vec{v}-\int_{\Omega ^{\varepsilon }}q\func{div}\vec{u}_{H}=0  \label{corto}
\end{equation}%
for any $\vec{v}\in V_{H}^{0}$ and $q\in M_{H}^{0}$. 

Choosing a triangle $T\in \mathcal{T}_{H}$ and considering the test functions $\vec{v}=0$, $q\in
\LL_{0}(T\cap \Omega ^{\varepsilon })$ with $q$ vanishing outside $T$
yields 
\[
\int_{\Omega ^{\varepsilon } \cap T}q\func{div}\vec{u}_{H}=0\quad \forall q\in
\LL_{0}(T\cap \Omega ^{\varepsilon }),\text{ i.e. }\func{div}\vec{u}%
_{H}=const\text{ on }T\cap \Omega ^{\varepsilon }. 
\]
We now observe that for any edge $E\in \mathcal{E}(T)$ there are functions $%
\vec{v}_{E,i}\in H^1(T)$, $i=1,\ldots,d$, such that $\vec{v}_{E,i}$ vanishes on $B^\epsilon\cap T$ and $\int_{E}\vec{v}%
_{E,i}=e_{i}$, with $\{e_{1},\ldots,e_{d}\}$ being the canonical basis of $\mathbb{R}^{d}$, and $\int_{E^{\prime }}\vec{v}_{E,i}=0$ for all $E^{\prime }\in 
\mathcal{E}(T)$, $E^{\prime }\not=E$. 
The space of functions in $H^{1}(T)$ vanishing on $%
B^{\epsilon }\cap T$ can be represented as%
\begin{eqnarray*}
V(T) &:&=\{\vec{v}:(H^{1}(T))^{d}\ \text{\ such that }\vec{v}=0\text{ on }%
B^{\varepsilon }\cap T\} \\
&=&V_{\int 0}(T)\oplus span\{\vec{v}_{E,i}\text{, }E\in \mathcal{E}_{H}\text{%
, }i=1,\ldots,d\}.
\end{eqnarray*}%
where 
\begin{equation*}
V_{\int 0}(T)=\{\vec{v}:(H^{1}(T))^{d}:\int_{E}\vec{v}=0,~\forall E\in 
\mathcal{E}(T)\text{ and }\vec{v}=0\text{ on }B^{\varepsilon }\cap T\}.
\end{equation*}%
Denoting for any $E\in \mathcal{E}(T)$ and $i=1,\ldots,d$%
\begin{equation*}
\lambda _{E,i}=\int_{\Omega ^{\varepsilon }}\nabla \vec{u}_{H}:\nabla \vec{v}%
_{E,i}-\int_{\Omega ^{\varepsilon }}p_{H}^{\prime }\mathop{\rm div}\nolimits%
\vec{v}_{E,i}
\end{equation*}%
and taking into account relation (\ref{corto}) with any $\vec{v}\in V_{\int
0}(T)$ extended by 0 outside $T$ and $q=0$ we see that%
\begin{equation*}
\int_{\Omega ^{\varepsilon }\cap T}\nabla \vec{u}_{H}:\nabla \vec{v}%
-\int_{\Omega ^{\varepsilon }\cap T}p_{H}^{\prime }\mathop{\rm div}\nolimits%
\vec{v}=\sum_{E\in \mathcal{E}_{H}}\vec{\lambda}_{E}\cdot \int_{E}\vec{v}%
,\quad \forall \vec{v}\in V(T)
\end{equation*}%
with $\vec{\lambda}_{E}=(\lambda _{E,1},\ldots,\lambda _{E,d})^{T}$. Integrating by
parts converts this into the strong form: for any $T\in \mathcal{T}_{H}$%
\begin{eqnarray*}
-\Delta \vec{u}_{H}+\nabla p_{H}^{\prime } &=&0\text{ on }\Omega
^{\varepsilon }\cap T \\
\vec{u}_{H} &=&0\text{ on }B^{\varepsilon }\cap T \\
n\cdot \nabla \vec{u}_{H}-p_{H}^{\prime }n &=&const\text{ on }E\cap \Omega
^{\varepsilon }\text{ for all edges }E\text{ of }\partial T
\end{eqnarray*}%
We see thus that $\vec{u}_{H}\in V_{H}$ and $p_{H}^{\prime }$ is uniquely
determined by $\vec{u}_{H}$ (indeed, for any $\vec{u}_{H}$ fixed in the
formulas above, the gradient $\nabla p_{H}^{\prime }$ is uniquely determined
on any triangle by the equation in the first line, and the overage $%
p_{H}^{\prime }$ over any triangle is 0). We have thus $p_{H}^{\prime }=\pi
_{H}(\vec{u}_{H})~$with $\vec{u}_{H}\in V_{H}$, which proves that $(\vec{u}%
_{H},p_{H})$ belongs to the space defined by (\ref{XHdef1}).

Reversing the arguments above, we prove easily that any $(\vec{u}%
_{H},p_{H})\in X_{H}$ in the sense of definition (\ref{XHdef1}) satisfies
also relation (\ref{XHdef}). We conclude thus that the definitions (\ref%
{XHdef}) an (\ref{XHdef1}) are equivalent.

It remains to prove $X_{H}^{ext}=X_{H}+X_{H}^{0}$, i.e. that
any $(\vec{u},p)\in X_{H}^{ext}$ can be represented as 
\[
\vec{u}=\vec{u}_{H}+\vec{u}^{0},\quad p_{H}=\pi _{H}(\vec{u}_{H})+\bar{p}%
_{H}+p^{0}
\]%
with some $\vec{u}_{H}\in V_{H}$, $\vec{u}^{0}\in V_{H}^{0}$, $\bar{p}%
_{H}\in M_{H}$ and $p^{0}\in M_{H}^{0}$. This is equivalent to the following
statement: for any $(\vec{u},p)\in X_{H}^{ext}$ there exists $(\vec{u}%
^{0},p^{0})\in X_{H}^{0}$ such that%
\begin{equation}
c((\vec{u}^{0},p^{0}),(\vec{v},q))=c((\vec{u},p),(\vec{v},q)),\quad \forall (%
\vec{v},q)\in X_{H}^{0}.  \label{cu0p0}
\end{equation}%
In order to prove the existence of such $(\vec{u}^{0},p^{0})$, we pick up
any triangle $T\in \mathcal{T}_{H}$ and remark that
the restriction of $(\vec{u}^{0},p^{0})$ to the triangle $T$ belongs to
$V_{\int 0}(T)\times \LL_{0}(T\cap \Omega ^{\varepsilon })$ and
satisfies%
\begin{eqnarray*}
\int_{T\cap \Omega ^{\varepsilon }}\nabla \vec{u}^{0} :\nabla \vec{v}%
-\int_{T\cap \Omega ^{\varepsilon }}p^{0}\func{div}\vec{v} &=& \int_{T\cap
\Omega ^{\varepsilon }}\nabla \vec{u}:\nabla \vec{v}-\int_{T\cap \Omega
^{\varepsilon }}p\func{div}\vec{v},\quad \forall \vec{v}\in V_{\int 0}(T) \\
\int_{T\cap \Omega ^{\varepsilon }}q\func{div}\vec{u}^{0} &=&\int_{T\cap
\Omega ^{\varepsilon }}q\func{div}\vec{u},\quad \forall q\in \LL_{0}(T)
\end{eqnarray*}%
This is a standard saddle point problem and its solution exists since we
obviously have the inf-sup property%
\[
\inf_{q\in \LL_{0}(T\cap \Omega ^{\varepsilon })}\sup_{\vec{v}\in V_{\int
0}(T)}\frac{\int_{T\cap \Omega ^{\varepsilon }}q\func{div}\vec{v}}{\Vert
q\Vert _{\LL (T)}|\vec{v}|_{H^{1}(T)}}>0.
\]

Finally, it is easy to see that $X_{H}\cap X_{H}^{0}=\{0\}$, i.e. the
relation (\ref{XHdirect}) holds true. Indeed, if $(\vec{u},p)\in X_{H}\cap
X_{H}^{0}$ then%
\begin{equation}
c((\vec{u},p),(\vec{v},q))=0  \label{cup00}
\end{equation}%
both for $(\vec{v},q)\in X_{H}$ and for $(\vec{v},q)\in X_{H}^{0}$. Since $%
X_{H}^{ext}=X_{H}+X_{H}^{0}$, we have (\ref{cup00}) for any $(\vec{v},q)\in
X_{H}^{ext}$which implies $(\vec{u},p)=0$ by the inf-sup property (\ref{infsupc}).
\end{proof}

\subsection{Basis functions for the space $V_H$}

The following lemma shows that one can construct a basis for $V_{H}$
consisting of functions associated to the edges of the mesh. Each basis
function is supported in the patch $\omega _{E}$ consisting of 2 triangles
ajacent to an edge $E\in \mathcal{E}_{H}$ as in the classical
Crouzeix-Raviart FEM$.$

\begin{lemma}
For any edge $E\in \mathcal{E}_{H}$ one can construct $\vec{\Phi}_{E,i}\in
V_{H}$, $i=1,\ldots,d$, such that $\int_{E}\vec{\Phi}_{E,i}=\vec{e}_{i}$, with $\{%
\vec{e}_{1},\ldots,\vec{e}_{d}\}$ being the canonical basis of $\mathbb{R}^{d}$,
and $\int_{E^{\prime }}\vec{\Phi}_{E,i}=0$ for all $E^{\prime }\in \mathcal{E%
}_{H}$, $E^{\prime }\not=E$. These functions form a basis of $V_{H}:$%
\begin{equation*}
V_{H}=span\{\vec{\Phi}_{E,i},~E\in \mathcal{E}_{H},~i=1,\ldots,d\}.
\end{equation*}%
Moreover, $\sup p$ $(\vec{\Phi}_{E,i})\subset \omega _{E}$, i.e. the
ensemble of 2 triangles from $\mathcal{T}_{H}$ ajacent to the edge $E\in 
\mathcal{E}_{H}$.
\end{lemma}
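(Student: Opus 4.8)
The plan is to realize each basis function $\vec{\Phi}_{E,i}$ as the solution of a local problem of exactly the type appearing in the definition \eqref{VHdef} of $V_H$, posed on the two-triangle patch $\omega_E$ and extended by zero to the rest of $\Omega$. Fix an edge $E\in\mathcal{E}_H$ and an index $i\in\{1,\ldots,d\}$. On each triangle $T\subset\omega_E$ I would look for a pair $(\vec{\Phi},s)\in V(T)\times\LL_0(\Omega^\varepsilon\cap T)$ solving the local Stokes system with the prescribed edge averages $\int_E\vec{\Phi}=\vec{e}_i$ and $\int_{E'}\vec{\Phi}=0$ for every other $E'\in\mathcal{E}(T)$. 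Using the direct decomposition $V(T)=V_{\int 0}(T)\oplus\operatorname{span}\{\vec{v}_{E',j}\}$ recorded in the previous lemma, one writes $\vec{\Phi}=\vec{v}_{E,i}+\vec{\Phi}_0$ with $\vec{\Phi}_0\in V_{\int 0}(T)$, and the problem reduces to a homogeneous-average saddle-point problem for $(\vec{\Phi}_0,s)$. Its well-posedness rests on the same two ingredients as before: coercivity of $\int_T\nabla\,\cdot:\nabla\,\cdot$ on $V_{\int 0}(T)$, i.e.\ the fact that the $H^1$-seminorm is a norm on that space, together with the inf-sup property for the divergence already stated in the proof of the previous lemma. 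Reading off the Euler--Lagrange equations shows that the multipliers associated with the average constraints are precisely the constant edge fluxes $n\cdot\nabla\vec{\Phi}-sn$, so the constructed pair satisfies every defining relation of $V_H$ triangle by triangle.

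I would then glue the two local solutions across $E$ and set the result to zero outside $\omega_E$. The weak-continuity requirement $\int_{E'}[[\vec{\Phi}_{E,i}]]=0$ is satisfied on every edge of the mesh: on $E$ both adjacent triangles contribute the average $\vec{e}_i$, while on any edge bounding $\omega_E$ the interior contribution vanishes by construction and the exterior one vanishes because $\vec{\Phi}_{E,i}\equiv 0$ there; since the zero field trivially solves the local system on triangles outside $\omega_E$, this produces a genuine element $\vec{\Phi}_{E,i}\in V_H$ with $\operatorname{supp}\vec{\Phi}_{E,i}\subset\omega_E$ and the claimed normalization $\int_{E'}\vec{\Phi}_{E,i}=\delta_{E,E'}\vec{e}_i$.

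Linear independence is then immediate: applying $\int_{E'}(\cdot)$ to a vanishing linear combination $\sum_{E,i}c_{E,i}\vec{\Phi}_{E,i}=0$ isolates the coefficients $c_{E',i}=0$. For the spanning statement I would take an arbitrary $\vec{u}_H\in V_H$ and form $\vec{w}=\vec{u}_H-\sum_{E,i}\bigl(\int_E\vec{u}_H\bigr)_i\,\vec{\Phi}_{E,i}$. Because $V_H$ is a linear space (the local equations are linear in $(\vec{u}_H,s)$), $\vec{w}\in V_H$, and by construction $\int_{E'}\vec{w}=0$ for every edge. On each triangle $T$ the restriction $\vec{w}|_T$ thus lies in $V_{\int 0}(T)$ and solves the homogeneous local Stokes system; testing the weak form against $\vec{w}$ itself annihilates the boundary term (its edge averages vanish) and the pressure term (since $\func{div}\vec{w}$ is constant while $s$ has zero mean), leaving $|\vec{w}|_{H^1(T)}=0$. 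The Poincar\'e-type argument then forces $\vec{w}|_T=0$, so $\vec{u}_H$ coincides with the stated combination and $V_H=\operatorname{span}\{\vec{\Phi}_{E,i}\}$.

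The main obstacle is the recurring coercivity/uniqueness ingredient, namely that $|\cdot|_{H^1(T)}$ is a norm on $V_{\int 0}(T)$: a function with vanishing gradient is constant on each connected component of $\Omega^\varepsilon\cap T$, and one must check that the constraints --- vanishing on $B^\varepsilon\cap T$ together with zero edge averages --- force that constant to be zero on every component. This is where genuine care is needed, since a component of $\Omega^\varepsilon\cap T$ may touch no obstacle, or an edge may be partly occluded by $B^\varepsilon$; the regularity assumptions on the mesh and the geometry of the perforations are what guarantee the inequality, and the same estimate simultaneously underwrites the well-posedness of the construction in the first paragraph and the uniqueness used in the spanning step.
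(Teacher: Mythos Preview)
Your argument is correct but follows a genuinely different route from the paper's. The paper does \emph{not} construct $\vec{\Phi}_{E,i}$ by solving local saddle-point problems; instead it exploits the decomposition $X_H^{ext}=X_H\oplus X_H^0$ established in the preceding lemma, which on the velocity level yields $V_H^{ext}=V_H^0\oplus V_H$. Starting from \emph{any} functions $\vec{v}_{E,i}\in V_H^{ext}$ with the desired edge averages, one writes $\vec{v}_{E,i}=\vec{v}_{E,i}^{\,0}+\vec{\Phi}_{E,i}$ with $\vec{\Phi}_{E,i}\in V_H$, and the spanning statement follows by comparing the two direct-sum decompositions $V_H^{ext}=V_H^0\oplus\operatorname{span}\{\vec{v}_{E,i}\}=V_H^0\oplus V_H$. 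The support claim is obtained by truncating $\vec{\Phi}_{E,i}$ to $\omega_E$, observing that the truncated function still lies in $V_H$ while the difference lies in $V_H^0$, and invoking $V_H\cap V_H^0=\{0\}$.

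Your approach is more constructive: you build each $\vec{\Phi}_{E,i}$ directly via the local Stokes problem on $\omega_E$ (this is precisely what the paper postpones to Remark~\ref{FB}) and prove spanning by a hands-on uniqueness argument for the homogeneous local problem. The trade-off is that you must re-establish well-posedness of the local saddle-point problem and the Poincar\'e inequality on $V_{\int 0}(T)$, whereas the paper gets existence, spanning, and uniqueness essentially for free from the already-proved decomposition of $X_H^{ext}$. Both proofs ultimately rest on the same inf-sup and Poincar\'e ingredients; the paper's version is shorter because the heavy lifting was done in the previous lemma, while yours has the merit of making the basis functions explicit from the outset and of being self-contained.
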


\begin{proof}
For any edge $E\in \mathcal{E}_{H}$ there exist functions $\vec{v}_{E,i}\in
V_{H}^{ext}$, $i=1,\ldots,d$, such that $\int_{E}\vec{v}_{E,i}=e_{i}$ and $%
\int_{E^{\prime }}\vec{v}_{E,i}=0$ for all $E^{\prime }\in \mathcal{E}_{H}$, 
$E^{\prime }\not=E.$The space $V_{H}^{ext}$ can be evidently decomposed as%
\begin{equation*}
V_{H}^{ext}=V_{H}^{0}\oplus span\{\vec{v}_{E,i}\text{, }E\in \mathcal{E}_{H}%
\text{, }i=1,\ldots,d\}.
\end{equation*}%
We also have the decomposition 
\begin{equation}
V_{H}^{ext}=V_{H}^{0}\oplus V_{H}  \label{dirv1}
\end{equation}%
which implies for any $E\in \mathcal{E}_{H}$ \ and $i=1,\ldots,d$ that there exist
functions $\vec{\Phi}_{E,i}\in V_{H}$ and $\vec{v}_{E,i}^{0}\in V_{H}^{ext}$
such that%
\begin{equation*}
\vec{v}_{E,i}=\vec{v}_{E,i}^{0}+\vec{\Phi}_{E,i}.
\end{equation*}%
Thus%
\begin{equation}
V_{H}^{ext}=V_{H}^{0}\oplus span\{\vec{\Phi}_{E,i}\text{, }E\in \mathcal{E}%
_{H}\text{, }i=1,\ldots,d\}.  \label{dirv2}
\end{equation}%
Comparing (\ref{dirv1}) with (\ref{dirv2}) implies%
\begin{equation*}
V_{H}=span\{\vec{\Phi}_{E,i}\text{, }E\in \mathcal{E}_{H}\text{, }i=1,\ldots,d\}.
\end{equation*}

It remains to prove that the support of $\vec{\Phi}_{E,i}$ is indeed within
the patch $\omega _{E}$. To this end, consider the function  $\vec{\Phi}%
_{E,i}^{\prime }\in V_{H}^{ext}$ such that $\vec{\Phi}_{E,i}^{\prime }=\vec{%
\Phi}_{E,i}$ on  $\omega _{E}$ and $\vec{\Phi}_{E,i}^{\prime }=0$ outside $%
\omega _{E}$. According to definition (\ref{VHdef}), $\vec{\Phi}%
_{E,i}^{\prime }\in V_{H}$. Moreover, $\vec{\Phi}_{E,i}^{\prime }-\vec{\Phi}%
_{E,i}\in V_{H}^{0}$ so that $\vec{\Phi}_{E,i}^{\prime }-\vec{\Phi}_{E,i}\in
V_{H}\cap V_{H}^{0}=\{0\}$. Thus,  $\vec{\Phi}_{E,i}$ coincides with $\vec{%
\Phi}_{E,i}^{\prime }$ whose support is in $\omega _{E}$ by construction.
\end{proof}

\begin{remark}\label{FB}
The explicit construction of the basis functions introduced above is as
follows: for any $E\in \mathcal{E}_{H}$ we construct $\vec{\Phi}%
_{E,i}:\Omega \rightarrow \mathbb{R}^{d}$ and the accompanying pressure $\pi
_{E,i}:\Omega ^{\varepsilon }\rightarrow \mathbb{R}$ such that $\vec{\Phi}%
_{E,i}$ and $\pi _{E,i}$ vanish outside the two triangles $T_{1},T_{2}$
adjacent to $E$ and they solve on each of this two triangles$:$%
\begin{eqnarray*}
-\Delta \vec{\Phi}_{E,i}+\nabla \pi _{E,i} &=&0\text{ on }\Omega
^{\varepsilon }\cap T_{k} \\
\func{div}\vec{\Phi}_{E,i} &=&const\text{ on }\Omega ^{\varepsilon }\cap
T_{k} \\
\vec{\Phi}_{E,i} &=&0\text{ on }B^{\varepsilon }\cap T_{k} \\
n\cdot \nabla \vec{\Phi}_{E,i}-\pi _{E,i}n &=&const\text{ on }F\cap \Omega
^{\varepsilon }\text{ for all }F\in \mathcal{E}(T_{k}) \\
\int_{F}\vec{\Phi}_{E,i} &=&\left\{ 
\begin{array}{c}
\vec{e}_{i},~F=E \\ 
0,~F\not=E%
\end{array}%
\right. \text{ for all }F\in \mathcal{E}(T_{k}) \\
\int_{\Omega ^{\varepsilon }\cap T_{k}}\pi _{E,i} &=&0
\end{eqnarray*}
\textcolor{black}{
In the weak form this gives: find $\vec{\Phi}_{E,i}\in H^{1}(T_{k})$ such
that $\vec{\Phi}_{E,i}=0$ on $T_{k}\cap B^{\varepsilon }$, $\pi _{E,i}\in
\LL_{0}(T_{k}\cap \Omega ^{\varepsilon })$ and the Lagrange multipliers $%
\vec{\lambda}_{F}$, $F\in \mathcal{E}(T_{k})$, satisfying%
\begin{eqnarray*}
\int_{T_k\cap \Omega ^{\varepsilon }}\nabla \vec{\Phi}_{E,i} :\nabla \vec{v}%
-\int_{T_k\cap \Omega ^{\varepsilon }}\pi _{E,i}\mathop{\rm div}\nolimits\vec{v%
}+\sum_{F\in \mathcal{E}(T_{k})}\vec{\lambda}_{F}\cdot \int_{F}\vec{\Phi}_{E,i}&=&0, \\
\quad \forall \vec{v}\in H^{1}(T_{k})~\text{such that }\vec{v}&=&0~\text{on }T_{k}\cap B^{\varepsilon } \\
\int_{T_k\cap \Omega ^{\varepsilon }}q\mathop{\rm div}\nolimits\vec{\Phi}%
_{E,i} &=&0,\quad \forall q\in \LL_{0}(T_{k}\cap \Omega ^{\varepsilon }) \\
\sum_{F\in \mathcal{E}(T_{k})}\vec{\mu}_{F}\cdot \int_{F}\vec{\Phi}_{E,i} &=&%
\vec{\mu}_{E}\cdot \vec{e}_{i},~\forall \vec{\mu}_{F}\in \mathbb{R}^{d},~F\in 
\mathcal{E}(T_{k})
\end{eqnarray*}%
We remind that $\LL_{0}(T_{k}\cap \Omega ^{\varepsilon })=\{q\in
\LL (T_{k}\cap \Omega ^{\varepsilon }):\int_{\Omega ^{\varepsilon }\cap
T_{k}}q=0\}.$
}

This gives also an explicit formula for the operator $\pi _{H}:$%
\begin{equation*}
\pi _{H}\left( \sum_{E,i}u_{E,i}\vec{\Phi}_{E,i}\right)
=\sum_{E,i}u_{E,i}\pi _{E,i}
\end{equation*}
\end{remark}

\begin{remark}
The space $V_{H}$ is reduced to the classical Crouzeix-Raviart finite
element space in the case without holes $B^{\varepsilon }=\varnothing $.
Indeed, it is easy to see that the basis functions constructed above can be
written in this case as $\vec{\Phi}_{E,i}=\Phi _{E}\vec{e}_{i}$ where $\Phi
_{E}$ is linear on any triangle $T\in \mathcal{T}_{H}$, discontinuous across
the edges, and such that $\int_{E}\Phi _{E}=1$ and $\int_{E^{\prime }}\Phi
_{E}=0$ for all $E^{\prime }\in \mathcal{E}_{H}$, $E^{\prime }\not=E$. 
\end{remark}

\subsection{Crouzeix-Raviart MsFEM coarse solution}

We now define the the MsFEM solution to problem (\ref{main})--(\ref{mainBC}) as $(\vec{u}%
_{H},p_{H})\in X_{H}$ such that  
\begin{equation}\label{maindiscret}
c((\vec{u}_{H},p_{H}),(\vec{v}_{H},q_{H}))=(f,\vec{v}_{h}),\quad \forall (%
\vec{v}_{H},q_{H})\in X_{H}  
\end{equation}%
We remind that $\vec{u}_{H},p_{H}$ can be represented as $\vec{u}_{H}\in
V_{H}$ and $p_{H}=\pi _{H}(\vec{u}_{H})+\bar{p}_{H}$ with $\bar{p}_{H}\in
M_{H}$. We have also $(\pi _{H}(\vec{u}_{H}),\func{div}\vec{v}_{H})=0$ for
all $\vec{u}_{H},\vec{v}_{H}\in V_{H}$. Hence the problem above can be
recast as: find $\vec{u}_{H}\in V_{H}$ and $\bar{p}_{H}\in M_{H}$ such that%
\begin{eqnarray*}
\int_{\Omega ^{\varepsilon }}\nabla \vec{u}_{H} : \nabla \vec{v}%
_{H}-\int_{\Omega ^{\varepsilon }}\bar{p}_{H}\func{div}\vec{v}%
_{H} &=& \int_{\Omega ^{\varepsilon }}\vec{f}\cdot \vec{v}_{H},\quad \forall 
\vec{v}_{H}\in V_{H} \\
\int_{\Omega ^{\varepsilon }}\bar{q}_{H}\func{div}\vec{u}_{H} &=&0,\quad
\forall \bar{q}_{H}\in M_{H}
\end{eqnarray*}

We remark that the second equation in the system above entails $%
\func{div}\vec{u}_{H}=0$ on $\Omega ^{\varepsilon }$ since $\func{div}%
V_{H}=M_{H}$. We can thus eliminate the pressure from this system by
introducing the subspace of $V_{H}$ consisting of divergence free functions:
denote%
\begin{equation*}
Z_{H}=\{\vec{v}_{H}\in V_{H\text{ }}\text{such that }\func{div}\vec{v}%
_{H}=0\}.
\end{equation*}%
The discrete velocity solution can then be alternatively  defined as: $\vec{u%
}_{H}\in Z_{H}$ such that%
\begin{equation*}
\int_{\Omega ^{\varepsilon }}\nabla \vec{u}_{H}:\nabla \vec{v}%
_{H}=\int_{\Omega ^{\varepsilon }}\vec{f}\cdot \vec{v}_{H},\quad \forall 
\vec{v}_{H}\in Z_{H}.
\end{equation*}%
This ensures immediately existence and uniqueness of $\vec{u}_{H}$.
Existence and uniqueness of $p_{H}$ now follows from the fact $\func{div}%
V_{H}=M_{H}$. 

\subsection{The inf-sup properties for the spaces $X_{H}$ and $X_{H}^{0}$}

We assume from now on that for any triangle $T\in \mathcal{T}_{H}$

\begin{equation}
\inf_{p\in \LL_{0}(T)}\sup_{\vec{v}\in V_{\int 0}(T)}\frac{\int_{T\cap
\Omega ^{\varepsilon }}p\func{div}\vec{v}}{\Vert p\Vert _{\LL (T\cap \Omega
^{\varepsilon })}|\vec{v}|_{H^{1}(T)}}\geq \gamma _{T}>0.  \label{infsupelem}
\end{equation}%
with a mesh-independent and uniform over triangles constant $\gamma _{T}$. Following \cite{ern}, it is
easy to check that this implies%
\begin{equation}
\inf_{(\vec{u},p)\in X_{H}}\sup_{(\vec{v},q)\in X_{H}}\frac{c((\vec{u},p),(%
\vec{v},q))}{\Vert \vec{u},p\Vert _{M}\Vert \vec{v},q\Vert _{X}}\geq \gamma
>0  \label{infsupH}
\end{equation}%
and 
\begin{equation}
\inf_{(\vec{u},p)\in X_{H}^{0}}\sup_{(\vec{v},q)\in X_{H}^{0}}\frac{c((\vec{u%
},p),(\vec{v},q))}{\Vert \vec{u},p\Vert _{M}\Vert \vec{v},q\Vert _{X}}\geq
\gamma >0  \label{infsupH0}
\end{equation}%
with $\gamma $ depending only on $\gamma _{T}$.

\subsection{Some error estimates}

\quad

\begin{lemma}
Let $(\vec{u},p)\in X$ be the sufficiently smooth exact solution to (\ref%
{main})-(\ref{mainBC}) and $(\vec{u}_{H},p_{H})\in X_{H}$ be the discrete solution defined
by (\ref{maindiscret}). Then, assuming (\ref{infsupelem}), 
\begin{equation*}
|\vec{u}-\vec{u}_{H}|_{H^1}+||p-p_{H}||_{\LL} 
\leq CH||\vec{f}||_{\LL }+CH^{%
\frac{1}{2}}\inf_{\vec{c}_{E}\in \mathbb{R}^{d},~E\in \mathcal{E}_{H}}\left(
\sum_{E\in \mathcal{E}_{H}}||\nabla \vec{u}\,n-pn-\vec{c}%
_{E}||_{\LL (E)}^{d}\right) ^{\frac{1}{2}}
\end{equation*}%
with a constant $C>0$ that depend only on the inf-sup constant $\gamma_T $ in (%
\ref{infsupelem}) and on the regularity of the mesh.
\end{lemma}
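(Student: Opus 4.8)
The plan is to follow the classical strategy for non-conforming mixed methods (a second Strang lemma), splitting the total error into a best-approximation part and a consistency (non-conformity) part, and to control both by the inf-sup stability (\ref{infsupH}) on $X_H$. Since (\ref{infsupH}) is stated for the full velocity-pressure form $c$ on the product space, the pressure error is handled simultaneously with the velocity error, so I never estimate $||p-p_{H}||_{\LL}$ in isolation: it suffices to bound, for a well-chosen interpolant $(\vec{w}_H,r_H)\in X_H$, the right-hand side of $\gamma\,||(\vec{u}_H-\vec{w}_H,p_H-r_H)||_X\le\sup_{(\vec{v}_H,q_H)\in X_H} c((\vec{u}_H-\vec{w}_H,p_H-r_H),(\vec{v}_H,q_H))/||(\vec{v}_H,q_H)||_X$ and then apply the triangle inequality. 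Using $c((\vec{u}_H,p_H),\cdot)=(\vec{f},\cdot)$ from (\ref{maindiscret}) and inserting $\pm\,c((\vec{u},p),\cdot)$, the numerator splits into a boundedness term $c((\vec{w}_H-\vec{u},r_H-p),\cdot)$, controlled by the approximation error, and the consistency residual $c((\vec{u},p),(\vec{v}_H,q_H))-(\vec{f},\vec{v}_H)$.

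Next I would bound the consistency residual. Integrating the broken form element by element and integrating by parts on each $T\cap\Omega^{\varepsilon}$, the strong equations $-\Delta\vec{u}+\nabla p=\vec{f}$ and $\func{div}\vec{u}=0$ collapse all volume contributions to $(\vec{f},\vec{v}_H)$ (the $q_H$-term drops because $\func{div}\vec{u}=0$), leaving only the interface terms $\sum_{E\in\mathcal{E}_H}\int_E(\nabla\vec{u}\,n-pn)\cdot[[\vec{v}_H]]$, where the contributions on $\partial B^{\varepsilon}$ vanish since $\vec{v}_H=0$ there and where the smoothness of $(\vec{u},p)$ makes the traction single-valued across each edge. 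The decisive point is that every $\vec{v}_H\in V_H\subset V_H^{ext}$ has zero-average jumps, $\int_E[[\vec{v}_H]]=0$, so an arbitrary constant $\vec{c}_E$ may be subtracted on each edge without changing the sum. Cauchy-Schwarz over edges, the scaled trace estimate $||[[\vec{v}_H]]||_{\LL(E)}\le CH^{1/2}|\vec{v}_H|_{H^1(\omega_E)}$ (valid because the jump has zero mean on $E$), and a finite-overlap argument then give $|c((\vec{u},p),(\vec{v}_H,q_H))-(\vec{f},\vec{v}_H)|\le CH^{1/2}(\sum_E||\nabla\vec{u}\,n-pn-\vec{c}_E||_{\LL(E)}^2)^{1/2}|\vec{v}_H|_{H^1}$, and taking the infimum over the $\vec{c}_E$ produces the second term of the claimed bound.

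Third, I would estimate the approximation error through the explicit MsFEM interpolant $\vec{u}_I\in V_H$ defined by matching edge averages, $\int_E\vec{u}_I=\int_E\vec{u}$ for all $E$, which is well defined thanks to the dual-basis property $\int_{E'}\vec{\Phi}_{E,i}=\delta_{EE'}\vec{e}_i$; in $X_H$ I take $(\vec{u}_I,\pi_H(\vec{u}_I)+\bar p)$ with $\bar p\in M_H$ the elementwise mean of $p$. A Gauss computation shows $\func{div}\vec{u}_I=0$, so $\vec{w}:=\vec{u}-\vec{u}_I$ is divergence-free, vanishes on $B^{\varepsilon}$, and has zero edge averages. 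Since $\vec{u}_I|_T$ is, by (\ref{VHdef}), the $H^1$-energy minimizer among admissible fields with these edge averages, its first-order optimality yields the Galerkin orthogonality $\int_{T\cap\Omega^{\varepsilon}}\nabla\vec{u}_I:\nabla\vec{w}=0$ (both the pressure- and the $\vec{\lambda}_E$-multiplier terms drop because $\func{div}\vec{w}=0$ and $\int_E\vec{w}=0$). Hence $|\vec{w}|_{H^1(T)}^2=\int_{T\cap\Omega^{\varepsilon}}\nabla\vec{u}:\nabla\vec{w}$; integrating by parts and using the PDE together with $\func{div}\vec{w}=0$ turns this into $\int_{T\cap\Omega^{\varepsilon}}\vec{f}\cdot\vec{w}+\sum_E\int_E(\nabla\vec{u}\,n-pn-\vec{c}_E)\cdot\vec{w}$. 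Bounding the volume term by the Poincaré inequality $||\vec{w}||_{\LL(T\cap\Omega^{\varepsilon})}\le CH|\vec{w}|_{H^1(T)}$ (legitimate since $\vec{w}$ has vanishing edge averages) and the interface term by the same scaled trace estimate lets me absorb $|\vec{w}|_{H^1}$ and obtain, after summation, $|\vec{u}-\vec{u}_I|_{H^1}\le CH||\vec{f}||_{\LL}+CH^{1/2}(\sum_E||\nabla\vec{u}\,n-pn-\vec{c}_E||_{\LL(E)}^2)^{1/2}$.

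Finally I would combine the three ingredients: the approximation term feeds the boundedness contribution in the Strang estimate, and both the approximation and the consistency contributions carry exactly the two terms $H||\vec{f}||_{\LL}$ and $H^{1/2}(\sum_E\|\cdots\|^2)^{1/2}$, which is why the final bound has precisely this form. The main obstacle I anticipate is not the algebra but establishing the two geometric inequalities — the Poincaré inequality with constant $O(H)$ and the scaled trace inequality with constant $O(H^{1/2})$ — uniformly in the perforation, i.e. on the possibly intricate domains $T\cap\Omega^{\varepsilon}$; this uniformity is exactly what the local inf-sup hypothesis (\ref{infsupelem}) and the mesh-regularity assumption are there to guarantee. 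Making the energy-minimization characterization of $\vec{u}_I$ rigorous in the presence of the natural (Neumann-type) edge conditions and the obstacle constraint $\vec{u}_I=0$ on $B^{\varepsilon}$ is the delicate structural point on which the whole estimate rests.
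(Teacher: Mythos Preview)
Your overall architecture coincides with the paper's: the same interpolant (the $X_H$-component of $(\vec u,p)$ in the decomposition $X_H^{ext}=X_H\oplus X_H^0$, equivalently the edge-average matching $\int_E\vec u_I=\int_E\vec u$ with pressure $\pi_H(\vec u_I)+\bar p$), the same consistency computation (edge terms with arbitrary constants $\vec c_E$ thanks to zero-mean jumps), and the same Poincar\'e/trace tools. The difference is in how you estimate the interpolation error. The paper does not use your element-wise Galerkin-orthogonality argument; instead it invokes the inf-sup property (\ref{infsupH0}) on $X_H^0$: picking a near-supremizer $(\vec v^0,q^0)\in X_H^0$, one integrates by parts, uses $-\Delta\vec u^0+\nabla p^0=\vec f$ on each $T$ and the fact that the traction of $(\vec u_I,p_I)$ is constant on each edge, and bounds $\|\vec u-\vec u_I,p-p_I\|_X$ in one shot.

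That difference matters, because your sketch has a gap on the pressure side. Your energy argument yields only $|\vec u-\vec u_I|_{H^1}$; it says nothing about $\|p-p_I\|_{\LL}=\|p-\bar p-\pi_H(\vec u_I)\|_{\LL}$. In your Strang step you can indeed make the pressure contribution to the boundedness term vanish (since $\func{div}\vec v_H$ is piecewise constant and $p-p_I$ has zero mean on each $T$), but the final triangle inequality $\|p-p_H\|_{\LL}\le\|p-p_I\|_{\LL}+\|p_I-p_H\|_{\LL}$ still requires $\|p-p_I\|_{\LL}$, which you never control. The paper's use of (\ref{infsupH0}) is precisely what closes this; alternatively you could recover it element by element from (\ref{infsupelem}) by testing $-\Delta\vec u^0+\nabla p^0=\vec f$ against the supremizer $\vec v\in V_{\int 0}(T)$, which produces a bound of the form $\gamma_T\|p^0\|_{\LL(T)}\le |\vec u^0|_{H^1(T)}+CH\|\vec f\|_{\LL(T)}+CH^{1/2}(\sum_E\|\nabla\vec u\,n-pn-\vec c_E\|_{\LL(E)}^2)^{1/2}$.

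One smaller correction: the Poincar\'e and trace inequalities you worry about are \emph{not} posed on the perforated set $T\cap\Omega^\varepsilon$. All the functions involved ($\vec v^0$, $\vec w=\vec u-\vec u_I$, $\vec v_H-\{\vec v_H\}_E$) live in $H^1(T)$ after extension by zero inside the holes and have vanishing edge averages on $\partial T$; the inequalities are the standard ones on the full triangle $T$ with constants depending only on mesh regularity, independent of the perforation. The local inf-sup (\ref{infsupelem}) is used for a different purpose, namely to obtain (\ref{infsupH}) and (\ref{infsupH0}); it is not what secures Poincar\'e/trace.
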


\begin{proof}
As usual, to prove an a priori error estimate we are going first to
construct an interpolant of the exact solution $(\vec{u},p)\in X$ and then
invoke a (slightly generalized) version of Cea lemma. A good interpolant $(%
\vec{u}_{I},p_{I})\in X_{H}$ may be introduced by requiring 
\begin{equation*}
(\vec{u},p)=(\vec{u}_{I},p_{I})+(\vec{u}^{0},p^{0})
\end{equation*}%
with $(\vec{u}^{0},p^{0})\in X_{H}^{0}$. Such a unique decomposition of $(%
\vec{u},p)$ exists and is unique due to the decomposition of the space $%
X_{H}^{ext}$, cf. (\ref{XHdirect}). It is easy to see that $\func{div}\vec{u}%
^{0}=0$. Indeed, for all $q_{0}\in M_{H}^{0}$%
\begin{equation*}
c((\vec{u}^{0},p^{0}),(0,q^{0}))=c((\vec{u},p),(0,q^{0}))
\end{equation*}%
i.e.%
\begin{equation*}
\sum_{T\in \mathcal{T}_{H}}\int_{T\cap \Omega ^{\varepsilon }}q^{0}\func{div}%
\vec{u}^{0}=0
\end{equation*}%
since $\func{div}\vec{u}=0$. This implies $\func{div}\vec{u}^{0}=c_{T}=const$
on $T\cap \Omega ^{\varepsilon }$ for all $T\in \mathcal{T}_{H}$. But, $%
\int_{\partial T}\vec{u}^{0}\cdot n=0$ due to the definition of $X_{H}^{0}$
so that $c_{T}=0$.

Owing to the inf-sup property (\ref{infsupH0}), we can find $(\vec{v}%
^{0},q^{0})\in X_{H}^{0}$ with $||\vec{v}^{0},q^{0}||_{X}=1$ such that%
\begin{eqnarray*}
\gamma ||\vec{u}-\vec{u}_{I},p-p_{I}||_{X} &=&\gamma ||\vec{u}%
^{0},p^{0}||_{X}\leq c((\vec{u}^{0},p^{0}),(\vec{v}^{0},q^{0})) \\
&=&\sum_{T\in \mathcal{T}_{H}}\int_{T\cap \Omega ^{\varepsilon }}(-\Delta 
\vec{u}^{0}+\nabla p^{0})\cdot \vec{v}^{0}+\sum_{T\in \mathcal{T}%
_{H}}\int_{\partial T\cap \Omega ^{\varepsilon }}(\nabla \vec{u}%
^{0}n-p^{0}n)\cdot \vec{v}^{0} \\
&=&\sum_{T\in \mathcal{T}_{H}}\int_{T\cap \Omega ^{\varepsilon }}\vec{f}%
\cdot \vec{v}^{0}+\sum_{T\in \mathcal{T}_{H}}\sum_{E\in \mathcal{E}%
(T)}\int_{E\cap \Omega ^{\varepsilon }}(\nabla \vec{u}\,n-pn-\vec{c}%
_{E})\cdot \vec{v}^{0}
\end{eqnarray*}%
since on each triangle $T\in \mathcal{T}_{H}$ \ 
\begin{equation*}
-\Delta \vec{u}^{0}+\nabla p^{0}=(-\Delta \vec{u}+\nabla p)-(-\Delta \vec{u}%
_{I}+\nabla p_{I})=\vec{f}
\end{equation*}%
and, moreover, on each edge $E\in \mathcal{E}_{H}$%
\begin{eqnarray*}
\int_{E\cap \Omega ^{\varepsilon }}(\nabla \vec{u}^{0}\,n-p^{0}n)\cdot \vec{v%
}^{0} &=&\int_{E\cap \Omega ^{\varepsilon }}(\nabla \vec{u}\,n-pn)\cdot \vec{%
v}^{0}-\int_{E\cap \Omega ^{\varepsilon }}(\nabla \vec{u}_{I}\,n-p_{I}n)%
\cdot \vec{v}^{0} \\
&=&\int_{E\cap \Omega ^{\varepsilon }}(\nabla \vec{u}\,n-pn)\cdot \vec{v}%
^{0}-\vec{\lambda}_{E}\cdot \int_{E\cap \Omega ^{\varepsilon }}\vec{v}^{0} \\
&=&\int_{E\cap \Omega ^{\varepsilon }}(\nabla \vec{u}\,n-pn-\vec{c}%
_{E})\cdot \vec{v}^{0}
\end{eqnarray*}%
with some constant $\vec{\lambda}_{E}$ depending on $(\vec{u}_{I},p_{I})\in
X_{H}$ (cf. the definition of $X_{H}$) \ and an arbitrary constant $\vec{c}%
_{E}$ (cf. the definition of $X_{H}^{0}$). We now remind the Poincar\'{e}
type inequality%
\begin{equation}
||\vec{v}^{0}||_{\LL (T)}\leq CH|\vec{v}^{0}|_{H^{1}(T)}  \label{poin}
\end{equation}%
and the trace inequality%
\begin{equation}
||\vec{v}^{0}||_{\LL (\partial T)}\leq CH^{\frac{1}{2}}|\vec{v}%
^{0}|_{H^{1}(T)}  \label{trace}
\end{equation}%
which are valid since the average $\vec{v}^{0}$ on the edges composing $%
\partial T$ is zero, see (\cite{MsFEMCR1}) for more details. This allows us to conclude%
\begin{eqnarray*}
\gamma ||\vec{u}-\vec{u}_{I},p-p_{I}||_{X} &\leq &\sum_{T\in \mathcal{T}%
_{H}}||\vec{f}||_{\LL (T)}||\vec{v}^{0}||_{\LL (T)}
\\&+&\sum_{T\in \mathcal{T}_{H}}\left( \sum_{E\in \mathcal{E}(T)}||\nabla \vec{u}\,n-pn-\vec{c}%
_{E}||_{\LL (E)}^{2}\right) ^{\frac{1}{2}}||\vec{v}^{0}||_{\LL (\partial
T)} \\
&\leq &CH||\vec{f}||_{\LL (\Omega )}|\vec{v}^{0}|_{H^{1}(\Omega )}\\
&+&CH^{\frac{1}{2}}\left( \sum_{E\in \mathcal{E}_{H}}||\nabla \vec{u}\,n-pn-\vec{c}%
_{E}||_{\LL (E)}^{2}\right) ^{\frac{1}{2}}|\vec{v}^{0}|_{H^{1}(\Omega )}
\end{eqnarray*}%
Finally,%
\begin{equation}
||\vec{u}-\vec{u}_{I},p-p_{I}||_{X}\leq CH||\vec{f}||_{\LL (\Omega )}+CH^{%
\frac{1}{2}}\left( \sum_{E\in \mathcal{E}_{H}}||\nabla \vec{u}\,n-pn-\vec{c}%
_{E}||_{\LL (E)}^{2}\right) ^{\frac{1}{2}}  \label{estinterp}
\end{equation}%
since $|\vec{v}^{0}|_{H^{1}(\Omega )}\leq ||\vec{v}^{0},q^{0}||_{X}=1$.

We use now the $c$-orthogonality between  $(\vec{u}_{I},p_{I})$ and $(\vec{u}%
^{0},p^{0})\ $\ to write for any $\forall (\vec{v}_{H},q_{H})\in X_{H}$%
\begin{equation*}
c((\vec{u}_{I},p_{I}),(\vec{v}_{H},q_{H}))=c((\vec{u},p),(\vec{v}%
_{H},q_{H}))=\int_{\Omega ^{\varepsilon }}\vec{f}\cdot \vec{v}%
_{H}+\sum_{T\in \mathcal{T}_{H}}\int_{\partial T\cap \Omega ^{\varepsilon
}}(\nabla \vec{u}\,n-pn)\cdot \vec{v}_{H}
\end{equation*}%
Comparing it with (\ref{maindiscret}) we conclude%
\begin{equation*}
c((\vec{u}_{I}-\vec{u}_{H},p_{I}-p_{H}),(\vec{v}_{H},q_{H}))=\sum_{T\in 
\mathcal{T}_{H}}\int_{\partial T\cap \Omega ^{\varepsilon }}(\nabla \vec{u}%
\,n-pn)\cdot \vec{v}_{H},\quad \forall (\vec{v}_{H},q_{H})\in X_{H}.
\end{equation*}%
Let for any $\vec{v}\in X_{H}^{ext}$ denote by $\{\vec{v}\}_{E}$ the average
of $\vec{v}$ over an edge $E$. We can then rewrite the sum of boundary
integrals as%
\begin{equation*}
\sum_{T\in \mathcal{T}_{H}}\int_{\partial T\cap \Omega ^{\varepsilon
}}(\nabla \vec{u}\,n-pn)\cdot \vec{v}_{H}=\sum_{T\in \mathcal{T}%
_{H}}\sum_{E\in \mathcal{E}(T)}\int_{E\cap \Omega ^{\varepsilon }}(\nabla 
\vec{u}\,n-pn)\cdot (\vec{v}_{H}-\{\vec{v}_{H}\}_{E})
\end{equation*}%
Indeed, any edge $E$ comes into these sums two times as contributions from
the two adjacent triangles and thus the additions with $\{\vec{v}_{H}\}_{E}$
cancel each other.   We can now proceed by inserting an arbitrary constant $%
\vec{c}_{E}$ on each edge:%
\begin{equation*}
\sum_{T\in \mathcal{T}_{H}}\int_{\partial T\cap \Omega ^{\varepsilon
}}(\nabla \vec{u}\,n-pn)\cdot \vec{v}_{H}=\sum_{T\in \mathcal{T}%
_{H}}\sum_{E\in \mathcal{E}(T)}\int_{E\cap \Omega ^{\varepsilon }}(\nabla 
\vec{u}\,n-pn-\vec{c}_{E})\cdot (\vec{v}_{H}-\{\vec{v}_{H}\}_{E})
\end{equation*}%
Finally, reusing Poincar\'{e} type inequality (\ref{poin}) and the trace
inequality (\ref{trace}) we conclude in a similar way as above that%
\begin{eqnarray*}
c((\vec{u}_{I}-\vec{u}_{H},p_{I}-p_{H}),(\vec{v}_{H},q_{H}))\leq CH^{\frac{1%
}{2}}\left( \sum_{E\in \mathcal{E}_{H}}||\nabla \vec{u}\,n-pn-\vec{c}%
_{E}||_{\LL (E)}^{2}\right) ^{\frac{1}{2}}|\vec{v}_{H}|_{H^{1}(\Omega
)},\\
\quad \forall (\vec{v}_{H},q_{H})\in X_{H}.
\end{eqnarray*}%
This entails by the inf-sup propoerty (\ref{infsupH}) 
\begin{equation}
||\vec{u}_{I}-\vec{u}_{H},p_{I}-p_{H}||_{X}\leq CH^{\frac{1}{2}}\left(
\sum_{E\in \mathcal{E}_{H}}||\nabla \vec{u}\,n-pn-\vec{c}%
_{E}||_{\LL (E)}^{2}\right) ^{\frac{1}{2}}  \label{estcea}
\end{equation}%
Combining the estimates (\ref{estinterp}) and (\ref{estcea}) with the
triangle inequality yields%
\begin{equation*}
||\vec{u}-\vec{u}_{H},p-p_{H}||_{X}\leq CH||\vec{f}||_{\LL (\Omega )}+CH^{%
\frac{1}{2}}\left( \sum_{E\in \mathcal{E}_{H}}||\nabla \vec{u}\,n-pn-\vec{c}%
_{E}||_{\LL (E)}^{2}\right) ^{\frac{1}{2}}
\end{equation*}
\end{proof}

The error estimate of the preceding Lemma is not completely satisfactory because of the oscillating nature of the exact solution which can make the norms on the edges quite big. Unfortunately, this phenomenon can be fully investigated theoretically only in the simplest setting of periodically placed holes. We announce here such a result which will be proved in the subsequent paper \cite{part2} following the ideas of \cite{MsFEMCR2}.
\begin{lemma}  
Let $B^{\epsilon}$ be the set of holes placed periodically in both directions at the distance $\epsilon$ of each other.  Assume (\ref{infsupelem}) and some technical hypotheses about the mesh as in \cite{MsFEMCR2}. Then the MsFEM solution satisfies
\begin{equation*}
|\vec{u}-\vec{u}_{H}|_{H^1}+||p-p_{H}||_{\LL}
\leq 
C\left(H +\sqrt\epsilon +\sqrt{\frac \epsilon H} \right) ||\vec{f}||_{H^{2}}
\end{equation*}
\end{lemma}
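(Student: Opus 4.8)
The plan is to feed the periodic two–scale expansion of the exact solution into the a~priori bound of the preceding Lemma, so that the entire argument collapses onto the control of the edge consistency functional
\[
\mathcal{R}=\inf_{\vec{c}_E\in\mathbb{R}^{d}}\left(\sum_{E\in\mathcal{E}_{H}}\|\nabla\vec{u}\,n-pn-\vec{c}_E\|_{\LL(E)}^{2}\right)^{1/2}.
\]
Indeed, the preceding Lemma already bounds the left–hand side by $CH\|\vec{f}\|_{\LL}+CH^{1/2}\mathcal{R}$, and since $\|\vec{f}\|_{\LL}\le\|\vec{f}\|_{H^{2}}$ the first term is exactly the announced $O(H)$ contribution. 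The whole task therefore reduces to proving $H^{1/2}\mathcal{R}\le C(\sqrt{\epsilon}+\sqrt{\epsilon/H})\|\vec{f}\|_{H^{2}}$, i.e.\ to estimating the residual stress of the true solution on the coarse edges.

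First I would install the homogenization machinery on the perforated reference cell $Y^{\ast}=Y\setminus B$: the Stokes correctors $(\vec{w}^{k},\pi^{k})$, $Y$–periodic, solving $-\Delta\vec{w}^{k}+\nabla\pi^{k}=\vec{e}_{k}$, $\func{div}\vec{w}^{k}=0$ in $Y^{\ast}$ and $\vec{w}^{k}=0$ on $\partial B$, together with the permeability $K_{jk}=\int_{Y^{\ast}}w^{k}_{j}$ and the homogenized Darcy pressure $p^{0}$ defined by $\func{div}\,K(\vec{f}-\nabla p^{0})=0$ with $K(\vec{f}-\nabla p^{0})\cdot n=0$ on $\partial\Omega$. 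This is the precise place where the $H^{2}$–regularity of $\vec{f}$ is spent: it guarantees $p^{0}\in H^{3}$, which is what allows the expansion to be differentiated and its traces to be taken on the edges. One then writes
\[
\vec{u}(x)=\epsilon^{2}\sum_{k}\vec{w}^{k}(x/\epsilon)\,(f_{k}-\partial_{k}p^{0})(x)+\vec{\theta}^{\epsilon}(x)+\vec{r}^{\epsilon}(x),
\]
\[
p(x)=p^{0}(x)+\epsilon\sum_{k}\pi^{k}(x/\epsilon)\,(f_{k}-\partial_{k}p^{0})(x)+\cdots,
\]
where $\vec{\theta}^{\epsilon}$ is the Stokes boundary–layer corrector restoring the no–slip condition on $\partial\Omega$ and $\vec{r}^{\epsilon}$ a higher order remainder, both controlled by $\|\vec{f}\|_{H^{2}}$ as in \cite{MsFEMCR2}.

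The second step is to evaluate $\nabla\vec{u}\,n-pn$ edge by edge with this expansion and to choose, on each $E$, the constant $\vec{c}_E$ to be the edge–average of the smooth leading stress $-p^{0}n$. Three contributions then separate. The purely macroscopic part $-(p^{0}-\overline{p^{0}}^{E})n$ is handled by a Poincar\'e inequality along the edge together with the $H^{3}$–bound on $p^{0}$, and contributes the $O(H)$ term. The $\epsilon$–periodic corrector part, built from $\nabla_{y}\vec{w}^{k}(x/\epsilon)$ and $\pi^{k}(x/\epsilon)$, oscillates at the scale $\epsilon$ and cannot be matched by a single constant on an edge whose length is not commensurate with the period; summing the resulting edge–trace defects over the $O(H^{-2})$ edges is what, after the sharp trace analysis of \cite{part2}, yields the resonance term $\sqrt{\epsilon/H}$. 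Finally $\vec{\theta}^{\epsilon}$ and $\vec{r}^{\epsilon}$ live in an $O(\epsilon)$–neighbourhood of $\partial\Omega$; the known $O(\sqrt{\epsilon})$ size of the Stokes/Darcy boundary layer, once traced on the $O(H^{-1})$ boundary edges, produces the $\sqrt{\epsilon}$ term.

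The hard part will be precisely the boundary–layer and resonance estimates. One must establish exponential–decay and trace estimates for the boundary–layer functions $\vec{\theta}^{\epsilon}$ near $\partial\Omega$—made delicate by the fact that the periodic obstacles may cut $\partial\Omega$ in an irregular way—and matching trace estimates for the periodic correctors $\vec{w}^{k},\pi^{k}$ on arbitrary segments crossing the $\epsilon$–lattice, so as to capture the genuine $\sqrt{\epsilon}$ and $\sqrt{\epsilon/H}$ contributions, which dominate the benign $O(H+\epsilon)$ behaviour one reads off from the interior alone. These are exactly the estimates requiring the additional technical hypotheses on the mesh borrowed from \cite{MsFEMCR2}, and it is for this reason that the complete proof is postponed to \cite{part2}.
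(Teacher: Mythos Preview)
The paper does not actually prove this lemma: it only announces the result and defers the full proof to the companion paper \cite{part2}, stating that it will follow the ideas of \cite{MsFEMCR2}. Your sketch is fully consistent with that indication---reducing everything to the edge residual via the preceding Lemma, inserting the Stokes-to-Darcy two-scale expansion with cell correctors $(\vec{w}^{k},\pi^{k})$ and homogenized Darcy pressure $p^{0}$, and attributing the $H$, $\sqrt{\epsilon/H}$, $\sqrt{\epsilon}$ contributions to the macroscopic, oscillating-corrector, and boundary-layer pieces respectively---and you yourself defer the sharp trace and boundary-layer estimates to \cite{part2}, exactly as the paper does.
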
  
}

\begin{figure}[hbt]
\centering
\includegraphics[width = 5in]{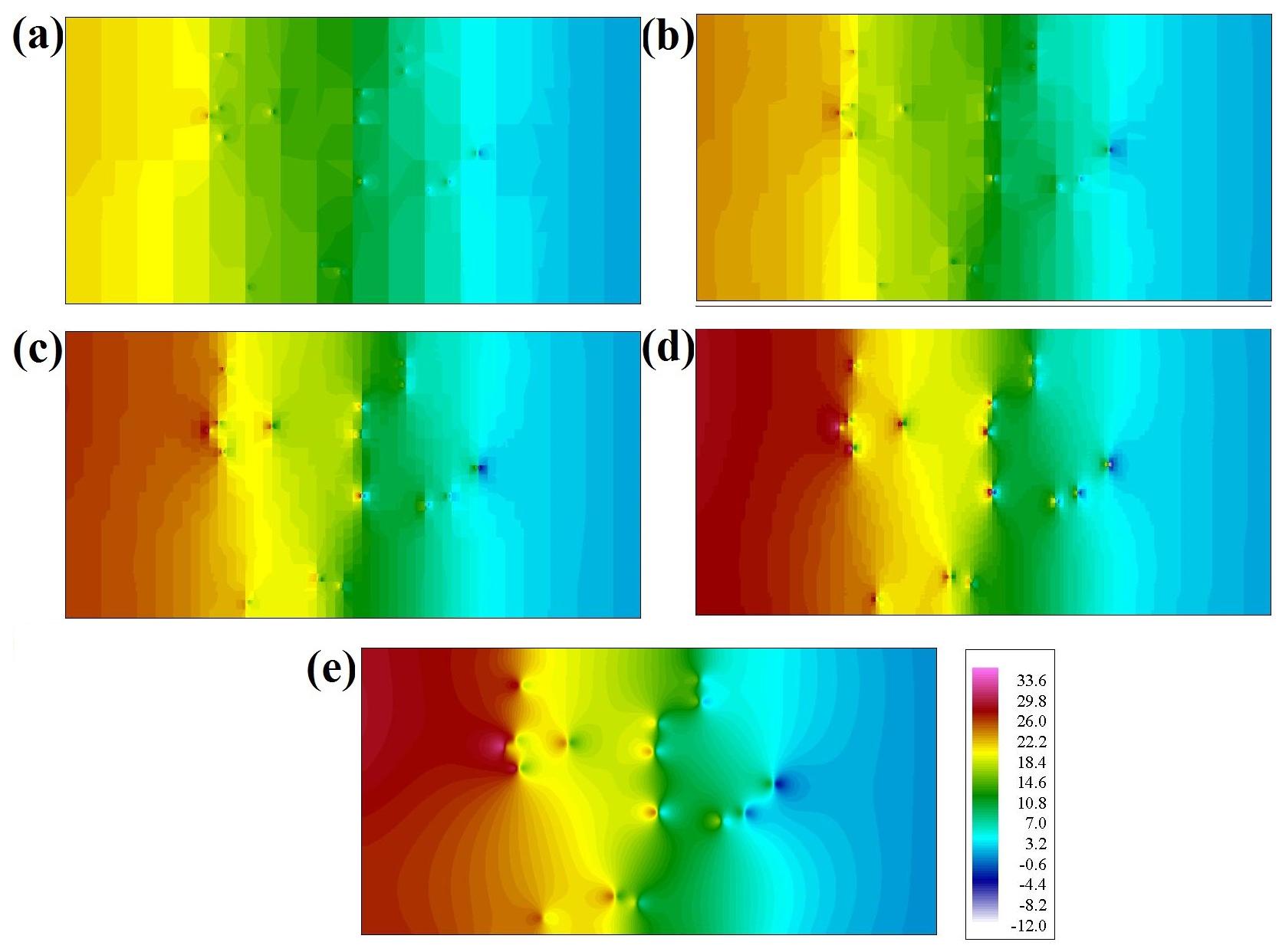} 
\caption{Pressure contours. Calculations done on (a) $8\times 16$, (b) $16\times 32$, (c) $32\times 64$, (d)$64\times128$, compared with (e) reference pressure computed on $1280 \times 640$}
\label{presrecon}
\end{figure}
\begin{figure}[hbt]
\centering
\includegraphics[width = 5in]{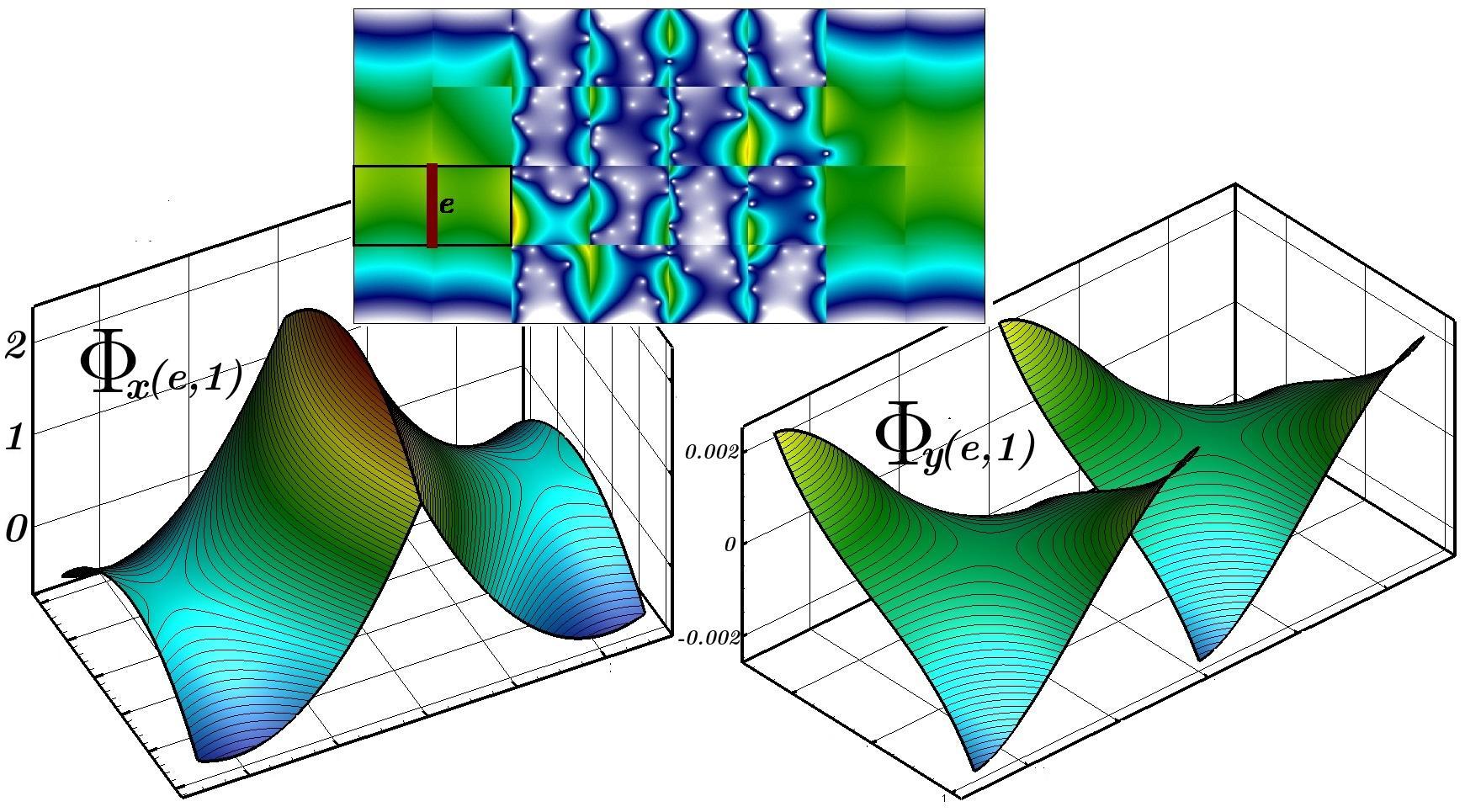} 
\caption{Crouzeix-Raviart MsFEM basis function for elements without obstacles}
\label{qnonperfor}
\end{figure} 
\begin{figure}[hbt]
\centering
\includegraphics[width = 5in]{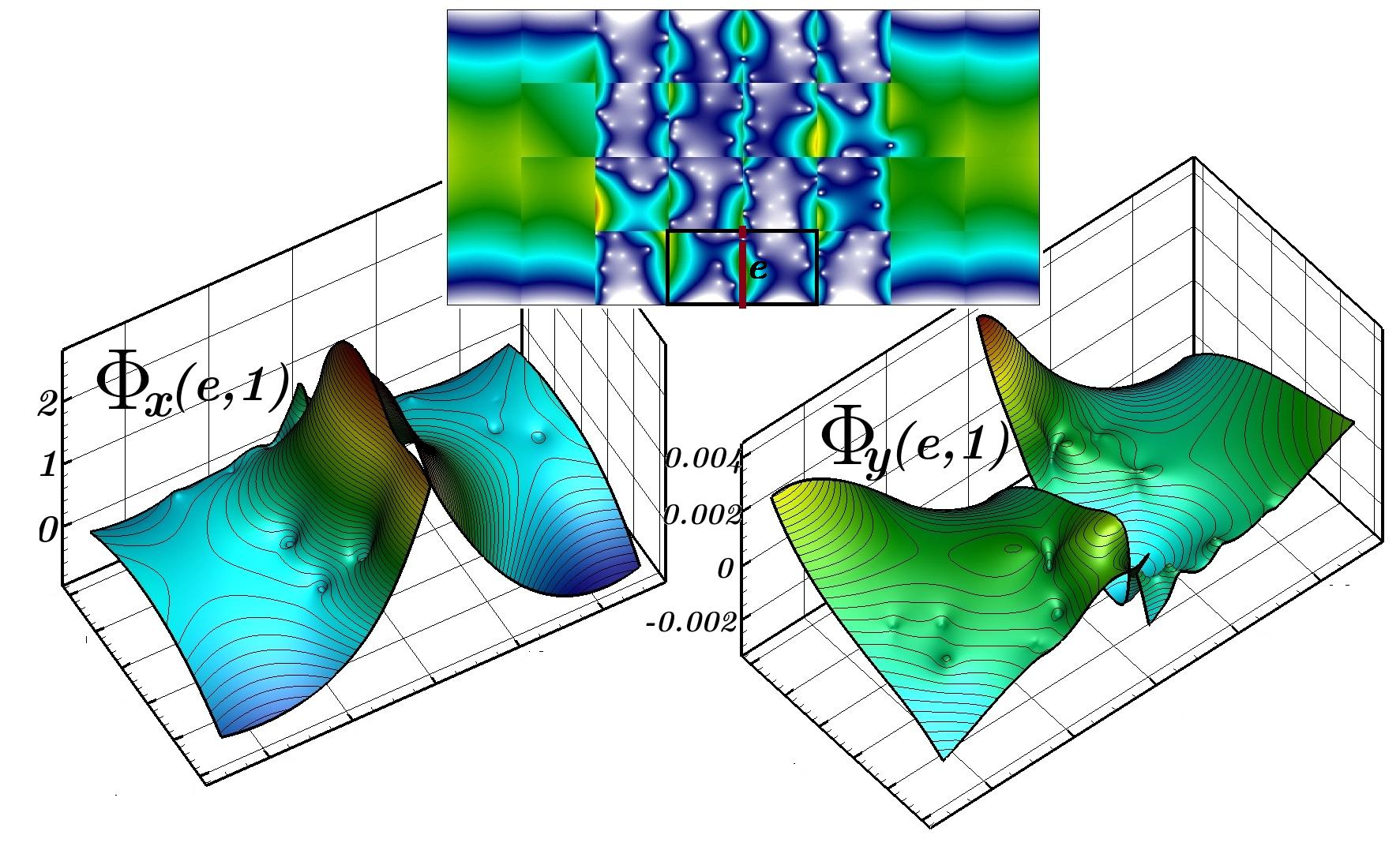} 
\caption{Crouzeix-Raviart MsFEM basis function for elements with arbitrarily placed obstacles}
\label{qperfor}
\end{figure} 

\section{Implementation issues}\label{implement} 
The Crouzeix-Raviart MsFEM as presented so far is not directly implementable since it relies on the exact solutions of the local problems in the construction of the basis functions, cf. Remark \ref{FB}. In practice, these problems should be discretized on mesh sufficiently fine to resolve the geometry of obstacles. For highly non-periodic pattern of obstacles, complicated body-fitted unstructured mesh is likely what engineers would resort to. In this article, we opt for performing our computations on a simple uniform Cartesian grid using the penalization method. We shall explain first how to do it when solving the original problem (\ref{main})--(\ref{mainBC}) as a whole without resorting to the MsFEM technique (this is needed any way to construct the reference solutions in our numerical experiments). 
\subsection{Application of penalization method to compute the reference solution}
\label{sec:applicationpenal}
To avoid complex and ad-hoc grid generation methods when solving (\ref{main})--(\ref{mainBC}) in $\Omega^\epsilon$ we replace it with the following penalized problem
\begin{eqnarray}
-\nabla\cdot(\nu^\kappa \nabla \vec{u}) + \sigma^\kappa\vec{u} + \nabla p = \vec{f}^\kappa & \hspace{5mm}\textrm{in}\hspace{5mm} & \Omega  \label{penal}\\
\nabla\cdot\vec{u} = 0 &\hspace{5mm}\textrm{in}\hspace{5mm}&\Omega  \nonumber\\
\vec{u}=\vec{w} & \hspace{5mm}\textrm{on}\hspace{5mm} & \partial\Omega \nonumber
\end{eqnarray}
in which
\begin{eqnarray}
\nu^\kappa = \left\{ \begin{array}{rl}
 \frac{1}{h} &\mbox{ in $B^\epsilon$} \\
 \nu &\mbox{ in $\Omega^\epsilon$}
       \end{array}  \right.,
\quad
\sigma^\kappa = \left\{ \begin{array}{rl}
 \frac{1}{h^3} &\mbox{ in $B^\epsilon$} \\
 0 &\mbox{ in $\Omega^\epsilon$}
       \end{array}  \right.,
\quad
f^\kappa = \left\{ \begin{array}{rl}
 0 &\mbox{ in $B^\epsilon$} \\
 f &\mbox{ in $\Omega^\epsilon$}
       \end{array}  \right..
\end{eqnarray}
Here $h$ is the size of the fine scale mesh element used to capture highly oscillatory solution. The penalization coefficient $\sigma^\kappa$ forces the solution to vanish rapidly inside the obstacles. Other variants of penalization methods are studied in \cite{Bruneau}. 

To calculate the reference solutions, we utilize the Q1-Q1 FEM which has velocity and pressure degrees of freedom defined at the same set of grid points, namely the uniform Cartesian grid $\mathcal{T}_h$ of step $h$. As is well known, cf. \cite{brezzifortin}, this choice of velocity and pressure spaces requires some stabilization which weakens the condition $\nabla \cdot \vec{u} = 0$. The simplest way to achieve this is by perturbing the incompressibility constraint with a pressure Laplacian term, see \cite{brezzipitkaranta}. Our numerical problem reduces thus to finding $\vec{u}_h$ and $p_h$ from the Q1-Q1 finite element space  such that
\begin{align*}
\int_\Omega \nu^\kappa \nabla \vec{u}_h:\nabla\vec{v}_h+\int_\Omega \sigma^\kappa\vec{u}_h\cdot\vec{v}_h-\int_\Omega p_h\nabla\cdot\vec{v}_h &= \int_\Omega \vec{v}_h \cdot \vec{f}, &\quad\forall\vec{v}_h\\
-\int_\Omega q_h\nabla\cdot\vec{u}_h-\theta h^2 \int_\Omega\nabla p_h \cdot\nabla q_h &= 0, &\quad\forall q_h
\end{align*}
where $\theta>0$ is the stabilization parameter and $\vec{v}_h$, $q_h$ go over the same finite element space. Certainly other stable or stabilized elements can be used.

\subsection{Calculation of Crouzeix-Raviart MsFEM basis functions}
\label{sec:reference}

The calculations of the multi-scale basis in this paper are carried out similarly to that of the reference solution. We introduce the fine meshes $\mathcal{T}_h(T)$ for each element $T$ of the coarse mesh $\mathcal{T}_H$ (in all our numerical experiments these local meshes will be just sub-meshes of the global fine mesh $\mathcal{T}_h$). The MsFEM basis functions are then calculated as
follows: for any $E\in \mathcal{E}_{H}$ we construct $\vec{\Phi}_{E,i}$ and the accompanying pressure $\pi_{E,i}$ supported in the two triangles $T_{1},T_{2}$ adjacent to $E$ and belonging to the Q1-Q1 FEM spaces on the meshes $\mathcal{T}_h(T_1)$ and $\mathcal{T}_h(T_2)$. They solve on each of these two triangles:
\begin{align*}
\int_{T_k} \nu^\kappa \nabla \vec{\Phi}_{E,i} :\nabla \vec{v}_h 
+\int_{T_k} \sigma^\kappa \vec{\Phi}_{E,i} \cdot \vec{v}_h 
-\int_{T_k}\pi _{E,i}\nabla\cdot\vec{v}_h +\sum_{F\in \mathcal{E}(T_{k})}\vec{\lambda}_{F}\cdot \int_{F}\vec{\Phi}_{E,i} &=0, 
  \quad\forall \vec{v}_h  \\
-\int_\Omega q_h\nabla\cdot\vec{\Phi}_{E,i}-\theta h^2 \int_\Omega\nabla \pi_{E,i} \cdot\nabla q_h &= 0, \quad\forall q_h\\
\sum_{F\in \mathcal{E}(T_{k})}\vec{\mu}_{F}\cdot \int_{F}\vec{\Phi}_{E,i} &=\vec{\mu}_{E}\cdot \vec{e}_{i} \\
\forall \vec{\mu}_{F}\in \mathbb{R}^{d},&~F\in\mathcal{E}(T_{k})
\end{align*}
where $\vec{v}_h$ and $q_h$ go over the same finite element spaces as $\vec{\Phi}_{E,i}$ and $\pi_{E,i}$. We also recall that $\mathcal{E}(T_{k})$ denotes the set of all the edges of the triangle $T_k$ and $\theta>0$ is the stabilization parameter. 

\subsection{Note on Boundary Condition}\label{nonhomog}
Special treatment on boundary condition is applied i.e., we approximate the strong form of non-homogeneous Dirichlet boundary condition with

\begin{equation}
\label{specialbc}
\int_{e} \vec{u}_H = \int_{e} \vec{w}, \hspace{5mm}\textrm{for}\hspace{1mm}\textrm{all}\hspace{1mm} e \in \mathcal{E}_H \hspace{1mm}\textrm{on}\hspace{1mm}\partial\Omega.
\end{equation}
Equation (\ref{specialbc}) is therefore equivalent with 
\begin{equation}
\vec{u}_H|_{e} = \frac{1}{|e|}\int_{e} {\vec{w}}
\end{equation} 
on each boundary edge $e$.
This approach is a modification with respect to the earlier works in \cite{MsFEMCR1, MsFEMCR2} where the boundary condition were strongly incorporated in the definition of $V_H$.  Our approach therefore gives more flexibility when implementing non zero $g$.  It will be demonstrated in the later sections how the application of this approach on our MsFEM gives conveniently converging results toward the correct solution.

\begin{table}[Hbt!]
\caption{Convergence study of cavity flow}
\begin{center}
\begin{tabular}{{l}{c}{c}{c}{c}{c}}
\hline\hline
Config.      &   Ratio $(H/\epsilon)$  & $L^1$ Rel. & $L^2$ Rel. & $H^1$ Rel. & $L^2$ P (Rel.) \\
\hline
$2\times 4$    & 17.544 & 0.756 & 0.640 & 0.837 & 0.992\\
$4\times 8$    & 8.772  & 0.576 & 0.516 & 0.780 & 0.628\\
$8\times 16$   & 4.386  & 0.477 & 0.396 & 0.625 & 0.480\\
$16\times 32$  & 2.193  & 0.337 & 0.269 & 0.617 & 0.390\\
$32\times 64$  & 1.096  & 0.257 & 0.194 & 0.544 & 0.312\\
$64\times 128$ & 0.548  & 0.160 & 0.102 & 0.493 & 0.288\\
\hline
\end{tabular}
\end{center}
\label{cavityerror}
\end{table}
\begin{table}[Hbt]
\caption{Convergence study of channel flow (case A)}
\begin{center}
\begin{tabular}{{l}{c}{c}{c}{c}{c}}
\hline\hline
Config.      &  Ratio $(H/\epsilon)$  & $L^1$ Rel. & $L^2$ Rel. & $H^1$ Rel. & $L^2$ P (Rel.)\\
\hline
$2\times 4$    & 25.00  & 0.305 & 0.395 &  0.631 & 0.874\\
$4\times 8$    & 12.50  & 0.169 & 0.212 &  0.605 & 0.601\\
$8\times 16$   & 6.250  & 0.110 & 0.142 &  0.594 & 0.563\\
$16\times 32$  & 3.125  & 0.090 & 0.115 &  0.506 & 0.420\\
$32\times 64$  & 1.563  & 0.067 & 0.087 &  0.411 & 0.275\\
$64\times 128$ & 0.781  & 0.043 & 0.062 &  0.320 & 0.141\\
\hline
\end{tabular}
\end{center}
\label{channelerrora}
\end{table}
\begin{table}[Hbt!]
\caption{Convergence study of channel flow (case B)}
\begin{center}
\begin{tabular}{{l}{c}{c}{c}{c}{c}}
\hline\hline
Config.      &  Ratio $(H/\epsilon)$  & $L^1$ Rel. & $L^2$ Rel. & $H^1$ Rel. & $L^2$ P (Rel.)\\
\hline
$2\times 4$    & 120.192 & 0.508 & 0.609 & 0.892 & 0.891\\
$4\times 8$    & 60.096 & 0.321 & 0.423 &  0.805 & 0.800\\
$8\times 16$   & 30.048 & 0.171 & 0.237 &  0.694 & 0.730\\
$16\times 32$  & 15.024 & 0.104 & 0.144 &  0.606 & 0.666\\
$32\times 64$  & 7.512  & 0.080 & 0.110 &  0.561 & 0.490\\
$64\times 128$ & 3.756  & 0.062 & 0.081 &  0.452 & 0.259\\
\hline
\end{tabular}
\end{center}
\label{channelerrorb}
\end{table}

\section{Numerical Results}
\label{sec:numerical}
Homogeneous $\nu = 1$ is assumed throughout our tests. The heterogeneities in the problems are represented by sporadic placements of obstacles. However, the application of oscillating $\nu$ is straightforward. In this paper, reconstruction of fine scale pressure field is not emphasized. In any case, the coarse scale pressure field (element-wise constant) can always be recovered.   
\subsection{Enclosed Flows in Heterogeneous Media}
In the first example, we consider the cavity flow problem. In this problem, all the velocities are known at $\partial\Omega$ (enclosed flow) in which the pressure is unique only up to a constant. The velocities vanish everywhere at boundaries except at the top of the domain $\Omega = [-1\leq x \leq 1, 0 \leq y \leq 1]$ where the tangential velocity there is set to be $u_x=1$. This tangential velocity is the only force that drives the flow. 49 small obstacles, each with width of $0.0285$ are randomly laid within the cavity, see Fig. \ref{cavityperfor}. The reference to which our solutions will be compared is obtained using Q1-Q1 FEM on $640 \times 1280$ elements.

In Figs. \ref{cavityux} and \ref{cavityuy} the solutions of Stokes flow inside the cavity in terms of $u_x$ and $u_y$ contours using several mesh configurations are given alongside that of the reference solution. Solutions on $32 \times 64$ and $64 \times 128$ elements are almost identical to the reference solution calculated on $640 \times 1280$ elements, but in some engineering circumstances, $16 \times 32$ elements already provides quantitatively sufficient measures of the flow. 

In Figs. \ref{cavitystream}, the streamlines and the velocity magnitude contours solved using $32 \times 64$ elements are compared with those of the reference. We notice that important recirculation zones at the top half of the domain can be well captured. In this problem, discontinuities are present on both ends of the top lid and it is shown in Table \ref{cavityerror} that our treatment of boundary conditions is sufficient in providing converging solutions. We describe the errors in terms of error norms $L^1$ relative, $L^2$ relative and $H^1$ relative. We note that with our method, coarse-grid converging results irrespective to positioning of obstacles without any oversampling methods is rather expected. 

\subsection{Open-Channel Flows in Heterogeneous Media}
Unlike the example of an enclosed flow given above, in the second examples we consider open-channel flows. The first past of this test (case A) includes $16$ obstacles each with width $\epsilon = 0.02$. The second part (case B) includes $144$ very fine obstacles each with width $\epsilon = 0.00832$, all randomly laid in the center of a 2-D channel $\Omega = [0\leq x \leq 4, -1 \leq y \leq 1]$, see Fig. \ref{channelperfor}(a)and(b). Parabolic inflow  boundary condition, $u_x = 1-y^2; u_y = 0$, is taken at the inlet (left) whereas natural boundary condition $\partial u/\partial n = 0$ is assumed at the outlet (right). At the top and bottom walls, no slip boundary conditions $\vec{u}=0$ are taken. 

In Figs. \ref{channeluxa} and \ref{channeluya} the case A solutions of our method on several mesh configurations in terms of $u_x$ and $u_y$ contours are given alongside those of the reference. As in the cavity flow example, the reference solution is calculated on $640\times 1280$ elements each with width of $h=0.003125$. As shown in the example of cavity flow, our results show converging behaviour toward reference solutions. Most of the flow features can be quantitatively obtained using mere $16 \times 32$ elements. In Table \ref{channelerrora}, we look at the convergence study of this problem. In Figs. \ref{channelstreama}, we can see the comparison of streamline and velocity magnitude of the solution on $32 \times 64$ compared to that of the reference. In Figs. \ref{presrecon}, the pressure reconstruction for several mesh configurations for this case are given and shown to be converging toward the reference solution where the $L_2$ relative error for configuration $128 \times 64$ is $0.141$. 

For case B, the contours of $u_x$ and $u_y$ in comparison to the reference solution, also calculated on $640\times 1280$ elements each with width of $h=0.003125$, are given in Figs. \ref{channelux} and \ref{channeluy}. It is shown that the method is able to recover the presence of very fine obstacles and important flow features already at $16 \times 32$. In Figs. \ref{channelstream}, the streamlines calculated on $32 \times 64$ coarse mesh are compared with those of the reference on top of velocity magnitude contour and are in good agreement. The error study of case B is shown in Table \ref{channelerrorb} showing converging results as exhibited at other cases even at considerably much larger ratios $(H/\epsilon)$.

In Figs. \ref{qnonperfor} and \ref{qperfor} we illustrate the Crouzeix-Raviart MsFEM basis function when obstacles are absent and present. We can see that the weak conformity of Crouzeix-Raviart basis function leads to natural boundary condition at the coarse element edges which adapts well with the arbitrary pattern of obstacles.

\section{Concluding remarks}
\label{sec:concluding}
The Crouzeix-Raviart MsFEM has been developed and tested for solving Stokes flow in genuine heterogeneous media. By genuine we mean on circumstances where analytical representation of the microscopic features of the flow are unavailable. This is illustrated by using very fine and non-periodic placements of obstacles. The method has been tested in the context of obstacles-filled cavity flows and open-channel flows. 

The Crouzeix-Raviart MsFEM basis functions are calculated within each coarse elements using stabilized Q1-Q1 FEM. Non-conforming nature of Crouzeix-Raviart element allows the method to accommodate random pattern of obstacles without having to use oversampling methods.  

Penalization method has been seamlessly incorporated in modeling arbitrary pattern of obstacles thus allowing extensive use of simple Cartesian mesh. Convergence studies of enclosed and open flows are given. Additionally, non-homogeneous boundary conditions are considered in the test cases to demonstrate the robustness of the method. Good quantitative agreement has been reached with the reference solution at relatively coarse mesh configurations.

Although the test cases are given in two spatial dimensions, the extension of this work onto three spatial dimensions is straightforward. The calculations of MsFEM basis functions within a coarse element are done independent of its neighboring elements which makes it suitable for the application of parallel programming.

\section{Acknowledgement}
This work is done under the auspices of ''Fondation Sciences et Technologies pour l'Aeronautique et l'Espace'', in the frame of the project 'AGREMEL' (contract \# RTRA-STAE/2011/AGREMEL/02)

\bibliographystyle{siam}
\bibliography{crmsfem}

\end{document}